\documentclass[a4paper, 11pt]{article}
\RequirePackage{fullpage}

\usepackage{multicol}
\usepackage{amsthm,amsmath,amssymb}
\usepackage{booktabs,array}
\usepackage{subcaption}
\usepackage[usenames,dvipsnames]{xcolor}
\usepackage[pdftex,breaklinks,colorlinks,
    citecolor={BlueViolet}, linkcolor={Blue},urlcolor=Maroon]{hyperref}

\usepackage{charter,eulervm}
\usepackage{enumerate}
\usepackage[final,expansion=alltext,protrusion=true]{microtype}

\theoremstyle{plain} 
\newtheorem{theorem}{Theorem}[section]
\newtheorem{lemma}[theorem]{Lemma}
\newtheorem{corollary}[theorem]{Corollary}
\newtheorem{proposition}[theorem]{Proposition}
\newtheorem{conjecture}[theorem]{Conjecture}

\title{On Well (Edge) Dominated and Equimatchable \\Strong Product Graphs}
\author{
  Yixin Cao\thanks{School of Computer Science and Engineering, Central South University, Changsha, China. { guiqiang.mou@csu.edu.cn, jxwang@mail.csu.edu.cn}} \thanks{Department of Computing, Hong Kong Polytechnic University, Hong Kong, China. { yixin.cao@polyu.edu.hk}} 
  \and Guiqiang Mou\footnotemark[1]
  \and Jianxin Wang\footnotemark[1]
}
\date{}

\begin{document}
\maketitle
\begin{abstract}
  A graph is well-(edge-)dominated if every minimal (edge) dominating set is minimum. A graph is equimatchable if every maximal matching is maximum. We study these concepts on strong product graphs. We fully characterize well-edge-dominated and equimatchable strong product graphs of nontrivial graphs, and identify a large family of graphs whose strong products with any well-dominated graph are well-dominated.
  
\end{abstract}

\section{Introduction}

All the graphs discussed in this paper are finite and simple.
A vertex subset~$D$ of a graph $G$ is a \emph{dominating set} of $G$ if any vertex not in~$D$ has a neighbor in~$D$.
Generally, a minimal dominating set can be arbitrarily larger than a minimum one, e.g., the two minimal dominating sets of a star graph.
A graph is \emph{well-dominated} if all its minimal dominating sets are minimum.
The concept of well-dominated graphs was motivated by the well-covered graphs defined by Plummer~\cite{plummer1970some}.
A graph is \emph{well-covered} if every maximal independent set (a set of pairwise nonadjacent vertices) is maximum.
Since every maximal independent set is a minimal dominating set, a well-dominated graph is well-covered.  The other direction does not hold; e.g., $K_{4, 4}$ is well-covered but not well-dominated.
These two concepts are equivalent on graphs with no cycles of length 4 or 5~\cite{levit2017well}.

Both concepts have natural edge correspondences.  
We say that two edges are \emph{adjacent} if they share an endpoint.
An edge subset $F$ of a graph $G$ is an \emph{edge dominating set} of $G$ if any edge not in $F$ is adjacent to at least one edge in $F$.
A \emph{matching} is a set of edges that are pairwise nonadjacent.
Note that there is a one-to-one mapping between edge dominating sets (resp., matchings) of a graph $G$ and dominating sets (resp., independent sets) of the line graph of~$G$.
A graph is \emph{well-edge-dominated} if all its minimal edge dominating sets are minimum, and a graph is \emph{equimatchable} if all its maximal matchings are maximum~\cite{frendrup2009note}.
Any maximal matching of a graph is also a minimal edge dominating set, and hence a well-edge-dominated graph is equimatchable.
The other direction is not true; e.g., $K_{2,3}$ is equimatchable but not well-edge-dominated.

There have been efforts in characterizing well-dominated graphs, well-edge-dominated graphs, and equimatchable graphs.  For earlier results, we refer to the survey of Plummer~\cite{plummer1993well}.
A recent line of study is to characterize product graphs with these properties.
One particular motivation for it, especially on well-dominated ones, is the widely open Vizing's conjecture~\cite{vizing1968some}, which is concerned with domination of Cartesian product graphs, and has also motivated the study of domination in other product graphs.
In particular, well-dominated graphs have been fully characterized within Cartesian products~\cite{kuenzel2024characterization}, lexicographic products~\cite{gozupek2017characterizations}, disjunctive products~\cite{anderson2021well}, and direct products~\cite{rall2023well}, 
while well-edge-dominated graphs have only been characterized within Cartesian product graphs~\cite{anderson2022well}.
Equimatchable graphs have been characterized when they are perfect-matchable~\cite{summer1979randomly}, claw-free~\cite{akbari2018equimatchable}, or triangle-free~\cite{buyukccolak2022triangle}.  As far as we know, there was no previous study of equimatchable product graphs.

This work is focused on strong product graphs that are well-dominated, well-edge-dominated, or equimatchable.  
In the \emph{strong product} of two graphs $G$ and $H$, denoted as $G\boxtimes H$, the vertex set is $V(G)\times V(H)$, and two distinct vertices $(u_1, v_1)$ and $(u_2, v_2)$ are adjacent if and only if $u_{2}\in N_G[u_{1}]$ and $v_{2}\in N_H[v_{1}].$
In other words, one of the following three conditions holds true: (i)~$u_1u_2\in E(G)$ and $v_1=v_2$, (ii)~$v_1v_2\in E(H)$ and $u_1=u_2$, or (iii)~$u_1u_2\in E(G)$ and $v_1v_2\in E(H)$.
The graphs $G$ and~$H$ are the \emph{factors} of $G\boxtimes H$.
Note that any graph is a strong product of itself and a trivial graph (on one vertex).  
In this case, the problem reduces to whether the nontrivial factor has the desired property.  Thus, throughout this work, we assume that both factors are nontrivial.

Rall~\cite{rall2023well} showed that for a strong product graph to be well-dominated, it is necessary that both factors are well-dominated.  This condition is sufficient when one of the factors is a \emph{complete} graph (a graph with all possible edges present), but not sufficient in general~\cite{rall2023well}.
We say that a graph~$G$ is \emph{trivially well-dominated} if there is a set of vertices~$v_{1}$, $v_{2}$, $\ldots$, $v_{t}$ such that their closed neighborhoods
is a clique partition of~$V(G)$; i.e., each closed neighborhood is a clique and every vertex is in precisely one of them.
For the definition, note that any minimal dominating set of~$G$ must contain exactly one vertex from each of these cliques.  Thus, $t$ is the domination number of~$G$.

\begin{theorem}\label{thm:well-dominated}
  Let $G$ be a trivially well-dominated graph.  For any well-dominated graph $H$, the strong product $G\boxtimes H$ is well-dominated.
\end{theorem}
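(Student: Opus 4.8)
The plan is to reduce everything to the domination structure of $H$, handled one clique-column at a time. Write $Q_i := N_G[v_i]$ for the cliques in the given clique partition of $V(G)$ (so the domination number of $G$ is $t$, as already noted), and put $C_i := Q_i \times V(H)$, the $i$-th \emph{column}; the columns partition $V(G\boxtimes H)$. I would first record the routine identity $N_{G\boxtimes H}[(a,b)] = N_G[a]\times N_H[b]$. For a set $D\subseteq V(G\boxtimes H)$, set $D_i := D\cap C_i$ and let $S_i\subseteq V(H)$ be the set of second coordinates appearing in $D_i$.

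The core of the proof is the following claim: \emph{$D$ is a dominating set of $G\boxtimes H$ if and only if $S_i$ is a dominating set of $H$ for every $i$.} For the forward direction, note that a vertex dominating $(v_i,v)$ must lie in $C_i$, since $(v_i,v)\in N_{G\boxtimes H}[(u',v')]$ forces $v_i\in N_G[u']$, i.e.\ $u'\in N_G[v_i]=Q_i$; letting $v$ range over $V(H)$ shows $S_i$ dominates $H$. The reverse direction is the step I expect to be the main obstacle, because $G\boxtimes H$ has many edges between columns and so, a priori, a vertex of $C_j$ might be dominatable only with help from a neighbouring column. The point that resolves this is that each $Q_j$ is a \emph{clique}: given $(x,y)$ with $x\in Q_j$, choose $c\in S_j$ with $y\in N_H[c]$ and a vertex $(u'',c)\in D_j$; then $x\in N_G[u'']$ holds automatically because $x$ and $u''$ both lie in the clique $Q_j$, so $(x,y)\in N_{G\boxtimes H}[(u'',c)]$. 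Hence every column $C_j$ is in fact dominated by $D_j$ alone, which is exactly what makes the column decomposition behave; the clique-partition hypothesis on $G$ is used here and essentially nowhere else.

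Granting the claim, the theorem follows by easy bookkeeping, and this is where the hypothesis that $H$ is well-dominated enters. Taking $D=\{v_1,\dots,v_t\}\times M$ for a minimum dominating set $M$ of $H$ yields, via the claim, a dominating set of size $t\cdot\gamma(H)$, while any dominating set $D$ satisfies $|D|=\sum_i|D_i|\ge\sum_i|S_i|\ge t\cdot\gamma(H)$; thus $\gamma(G\boxtimes H)=t\cdot\gamma(H)$. Now let $D$ be a \emph{minimal} dominating set. If some $D_i$ contained two vertices with the same second coordinate, deleting one of them would leave every $S_j$ unchanged, so by the claim the result would still dominate, contradicting minimality; hence $|D_i|=|S_i|$ for every $i$. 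Next, if some $S_i$ were not a minimal dominating set of $H$, there would be $w\in S_i$ with $S_i\setminus\{w\}$ still dominating $H$; deleting from $D_i$ the (now unique) vertex with second coordinate $w$ would leave all $S_j$ ($j\ne i$) unchanged and replace $S_i$ by $S_i\setminus\{w\}$, so by the claim the result would still dominate, again contradicting minimality. Therefore each $S_i$ is a minimal, hence (since $H$ is well-dominated) minimum, dominating set of $H$, so $|S_i|=\gamma(H)$ and $|D|=\sum_i|D_i|=\sum_i|S_i|=t\cdot\gamma(H)=\gamma(G\boxtimes H)$. Thus every minimal dominating set of $G\boxtimes H$ is minimum.
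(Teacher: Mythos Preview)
Your proof is correct and follows essentially the same approach as the paper's: decompose $V(G\boxtimes H)$ into the columns $C_i=N_G[v_i]\times V(H)$, project each $D\cap C_i$ onto $V(H)$, and show that for a minimal $D$ these projections are minimal dominating sets of $H$ with no repeated second coordinates, whence $|D|=t\cdot\gamma(H)$. The only difference is presentational: you isolate and prove the biconditional ``$D$ dominates $G\boxtimes H$ iff every $S_i$ dominates $H$'' explicitly, whereas the paper uses both directions implicitly.
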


Note that a complete graph is trivially well-dominated, with $t = 1$.  Thus, Theorem~\ref{thm:well-dominated} subsumes a result of Rall~\cite{rall2023well}, in which~$G$ needs to be a complete graph.
Moreover, all well-dominated chordal graphs are trivially well-dominated~\cite{prisner1996well}.
On the other hand, we conjecture that no other graphs have this property.  In other words, for each well-dominated graph that is not trivially well-dominated, there exists another well-dominated graph such that their strong product graph is not well-dominated.

For well-edge-dominated graphs, we give a full characterization within strong product graphs.
We use strong induction on the sum of matching numbers of the factors to show that at least one factor has a \emph{perfect matching} (a matching whose endpoints involve the whole vertex set) for well-edge-dominated strong product graphs.
As said, a perfect matching is a minimal edge dominating set, which means that such a graph has a very simple structure~\cite{anderson2022well}.
Let us remark that our proof can be used to prove a similar result of Anderson et al.~\cite{anderson2022well} on Cartesian product graphs, but not the other way.

\begin{theorem}\label{thm:well-edge-dominated}
  A strong product of two nontrivial and connected graphs is well-edge-dominated if and only if both factors consist of a single edge.
\end{theorem}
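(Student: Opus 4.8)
The plan is to prove the two directions separately; almost all the work is in the forward direction. For the backward direction, a single edge is $K_2$ and $K_2\boxtimes K_2=K_4$, so it suffices to check that $K_4$ is well-edge-dominated, which is a routine finite verification: no single edge of $K_4$ dominates all six edges, so every edge dominating set has at least two edges; conversely, every two-edge subset of $K_4$ is an edge dominating set, so any edge dominating set of size at least three properly contains an edge dominating set and is not minimal. Hence all minimal edge dominating sets of $K_4$ have exactly two edges.

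For the forward direction, let $G\boxtimes H$ be well-edge-dominated with $G,H$ nontrivial and connected. The core of the argument is the claim that \emph{at least one of $G$, $H$ has a perfect matching}. Granting this, say $G$ has a perfect matching $M$; then for each $v\in V(H)$ the edges $(x,v)(y,v)$ with $xy\in M$ form a perfect matching of the layer $V(G)\times\{v\}$, and over all $v$ these combine into a perfect matching of $G\boxtimes H$. Thus $G\boxtimes H$ is a well-edge-dominated graph with a perfect matching, and by the classification of such graphs in~\cite{anderson2022well} it belongs to a short, explicit list. On the other hand, picking an edge $u_1u_2$ of $G$ and an edge $v_1v_2$ of $H$, the four vertices $(u_i,v_j)$ induce a $K_4$ in $G\boxtimes H$; inspecting the list, $K_4$ is its only member containing a copy of $K_4$. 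Therefore $|V(G)|\cdot|V(H)|=4$, and, both factors being nontrivial, $G\cong H\cong K_2$.

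It remains to prove the core claim, and here I would use strong induction on $\nu(G)+\nu(H)$, the sum of the matching numbers of the factors. Assume for a contradiction that neither $G$ nor $H$ has a perfect matching. Using the Gallai--Edmonds structure of $G$ (or just Berge's formula), fix a vertex $u\in V(G)$ missed by some maximum matching together with a neighbour $w$ of $u$ covered by that matching, and set $\hat G:=G-u-w$; since a maximum matching of $G$ avoiding both $u$ and $w$ could be enlarged along the edge $uw$, we get $\nu(\hat G)=\nu(G)-1$, hence $\nu(\hat G)+\nu(H)<\nu(G)+\nu(H)$. The goal is then to show that (a suitable nontrivial connected factor of) $\hat G\boxtimes H$ is still well-edge-dominated; the induction hypothesis then yields a perfect matching in one of its factors, and lifting that matching produces a perfect matching of $H$, an immediate contradiction, or of $\hat G$, which together with the edge $uw$ is a perfect matching of $G$, again a contradiction.

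The main obstacle is precisely this descent step. Being well-edge-dominated is not preserved under vertex deletion in general, so it is not automatic that deleting the two slices $\{u\}\times V(H)$ and $\{w\}\times V(H)$ from $G\boxtimes H$ leaves a well-edge-dominated graph; and one cannot shortcut the problem by merely arguing non-equimatchability, since, for instance, $K_3\boxtimes K_3=K_9$ is equimatchable although neither factor has a perfect matching, so the failure of well-edge-domination has to be exhibited through minimal edge dominating sets that are not matchings. Carrying this through should require choosing $u$ and $w$ carefully, exploiting the factor-critical components in the Gallai--Edmonds decomposition of $G$, and then a careful analysis of how minimal edge dominating sets of $\hat G\boxtimes H$ extend to, and those of $G\boxtimes H$ restrict to, the other graph, ruling out that the lifting ever destroys minimality or changes cardinality. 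I expect this bookkeeping, rather than any single trick, to be the heart of the proof.
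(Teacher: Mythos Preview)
Your overall architecture matches the paper's exactly: reduce to the claim that one factor has a perfect matching, then invoke the classification of well-edge-dominated graphs with a perfect matching together with the observation that $G\boxtimes H$ contains a $K_4$. The induction on $\nu(G)+\nu(H)$ is also what the paper does. The gap is in the descent step, which you flag as the ``main obstacle'' and propose to attack via Gallai--Edmonds and a delicate extension/restriction analysis of minimal edge dominating sets. None of that is needed.

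The point you are missing is that the two slices you delete, $\{u\}\times V(H)$ and $\{w\}\times V(H)$, are precisely $V(M')$ for the matching
\[
M'=\{(u,v)(w,v):v\in V(H)\}
\]
of $G\boxtimes H$ (these are edges because $uw\in E(G)$). And it is an easy general fact, stated in~\cite{anderson2022well} and quoted in the paper, that if a graph is well-edge-dominated and $M$ is any matching of it, then so is the graph obtained by deleting $V(M)$: any maximal matching of the remainder becomes one of the whole graph after adding $M$, and likewise for minimal edge dominating sets. So $\hat G\boxtimes H$ is well-edge-dominated immediately, with no bookkeeping. The paper actually performs the deletion on the $H$ side, removing all but one edge of a maximum matching of $H$ and passing to the largest component $H'$ of what remains, but the mechanism is identical.

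Two smaller issues. First, you do not treat the base case $\nu(G)=\nu(H)=1$; there your deletion leaves a trivial factor and the induction cannot be invoked, so one must check directly that no product of two graphs from $\{K_3\}\cup\{K_{1,n}:n\ge 2\}$ is well-edge-dominated. The paper does this by exhibiting explicit minimal edge dominating sets of different sizes. Second, your final lifting is not quite right: the induction only gives a perfect matching in a \emph{component} $\hat G'$ of $\hat G$, not in all of $\hat G$, so you cannot add $uw$ and conclude that $G$ has one. The paper sidesteps this by applying the classification once more to $\hat G'\boxtimes H$ itself: it has a perfect matching and is well-edge-dominated, hence is $K_4$ or $K_{t,t}$; but it contains a triangle and has at least $2\cdot|V(H)|\ge 6$ vertices (since $H$ has no perfect matching), a contradiction.
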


As said, every well-edge-dominated graph is equimatchable.  Anderson et al.~\cite{anderson2022well} showed that these two concepts are equivalent on Cartesian product graphs when both factors are nontrivial and connected.  However, they are not equivalent within strong product graphs. 
Our third result is a full characterization of equimatchable strong product graphs whose factors are nontrivial and connected. 

It is easy to check by definition that all complete graphs are equimatchable.
Let~$\alpha(G)$ denote the \emph{independence number} of graph~$G$, i.e., the cardinality of a maximum independent set of~$G$.
Note that a graph~$G$ is complete if and only if~$\alpha(G) = 1$.
Akbari et al.~\cite{akbari2018equimatchable} observed that for odd graphs, the condition can be relaxed: an odd connected graph~$G$ is equimatchable if~$\alpha(G) \le 2$.
Here by an \emph{odd (resp., even) graph} we mean a graph of an odd (resp., even) order.
Note that the bound of~$\alpha(G) \le 2$ is tight because the independence number of the bull graph (the graph obtained from~$P_{5}$ by adding an edge between the second and fourth vertices), which is not equimatchable, is three.
It turns out that whether a strong product graph is equimatchable can be fully decided by the independence numbers of the factors.

\begin{theorem}\label{thm:equimatchable}
  Let~$G$ and~$H$ be two nontrivial and connected graphs.
  The strong product $G\boxtimes H$ is equimatchable if and only if 
  \begin{description}
  \item[Even] 
    $\alpha(G) + \alpha(H) = 2$
    when one of~$G$ and~$H$ is even; or
  \item[Odd] $\alpha(G) + \alpha(H)\leq 3$ when both~$G$ and~$H$ are odd.
  \end{description}
\end{theorem}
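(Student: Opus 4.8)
The plan is to treat the two directions separately, the forward (``only if'') direction being the substantial one. For ``if'': if $G$ and $H$ are both complete then $G\boxtimes H$ is complete and hence equimatchable, and this is the only possibility in the Even case; in the Odd case we may assume by symmetry that $G$ is complete, so $\alpha(H)\le 2$. Since $G$ is complete, two vertices of $G\boxtimes H$ are nonadjacent exactly when their $H$-coordinates are distinct and nonadjacent in $H$, so an independent set of $G\boxtimes H$ has pairwise distinct $H$-coordinates forming an independent set of $H$; hence $\alpha(G\boxtimes H)=\alpha(H)\le 2$, and as $G\boxtimes H$ is connected and of odd order, the theorem of Akbari et al.~\cite{akbari2018equimatchable} gives equimatchability.

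For the ``only if'' direction I would use two standard facts. First, if $\Gamma$ is equimatchable and $uv\in E(\Gamma)$, then $\Gamma-u-v$ is equimatchable (adjoin $uv$ to any maximal matching of $\Gamma-u-v$); iterating, $\Gamma-S$ is equimatchable whenever $\Gamma[S]$ has a perfect matching. Second, $\mathrm{def}(\Gamma)$ is exactly the minimum size of an independent set $J$ for which $\Gamma-J$ has a perfect matching, and if $\Gamma$ is equimatchable then \emph{every} such $J$ has size $\mathrm{def}(\Gamma)$; so it suffices to produce one independent set $J$ with $\Gamma-J$ perfectly matchable and $|J|>\mathrm{def}(\Gamma)$. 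I will also use the elementary bounds $\alpha(G\boxtimes H)\ge\alpha(G)\alpha(H)$ (a product of independent sets) and $\mathrm{def}(G\boxtimes H)\le\mathrm{def}(G)\,\mathrm{def}(H)$ (extend a maximum matching of $G$ over each $H$-fibre, then match the fibres over the remaining vertices of $G$), and the fact that a connected graph with $\alpha\ge 2$ has $\alpha\ge\mathrm{def}+1$ (the only connected graphs with $\alpha=\mathrm{def}$ are the odd complete graphs, by Gallai--Edmonds).

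Combining these: if $\alpha(G)\ge 2$ and $\alpha(H)\ge 2$ then $\alpha(G)\alpha(H)\ge(\mathrm{def}(G)+1)(\mathrm{def}(H)+1)>\mathrm{def}(G)\,\mathrm{def}(H)\ge\mathrm{def}(G\boxtimes H)$ and $\alpha(G)\alpha(H)\ge 4$, so a product of maximum independent sets of $G$ and $H$ (minus a vertex if parity requires) is an independent set $J$ with $|J|>\mathrm{def}(G\boxtimes H)$; as soon as $(G\boxtimes H)-J$ has a perfect matching, this contradicts equimatchability. Hence one factor, say $G=K_m$, is complete, and $\alpha(G\boxtimes H)=\alpha(H)$. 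If $m$ is even --- more generally, whenever $K_m\boxtimes H$ has a perfect matching --- then by the characterization of perfect-matchable equimatchable graphs as the $K_{2n}$ and $K_{n,n}$~\cite{summer1979randomly}, and since $K_m\boxtimes H$ contains $K_4$ and so is not bipartite, $K_m\boxtimes H=K_{2n}$ and $H$ is complete; this settles the Even case and the $\alpha(H)=1$ part of the Odd case. For $m$ and $|V(H)|$ both odd it remains to forbid $\alpha(H)\ge 3$: isolating a maximum independent set of $K_m\boxtimes H$ (of size $\alpha(H)$, reduced to $\alpha(H)-1$ when parity demands) gives a maximal matching of deficiency at least $\alpha(H)-1$, whereas $\mathrm{def}(K_m\boxtimes H)\le\mathrm{def}(H)\le\alpha(H)-1$; the two differ except when $\mathrm{def}(H)=\alpha(H)-1$ with $\alpha(H)$ even, a configuration in which $H$ is forced to be a single vertex joined to $\alpha(H)$ odd cliques and where a short direct computation gives $\mathrm{def}(K_m\boxtimes H)\le\max(1,\alpha(H)-m)<\alpha(H)-1$.

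The main obstacle, in all subcases of the forward direction, is verifying that the complementary sets are perfectly matchable: strong products such as $K_m\boxtimes H$ need not have a perfect matching even when their order is even (e.g.\ $K_3\boxtimes K_{1,99}$ has none), so the independent set to be deleted must be chosen with an eye on the matchability of its complement, and several parity subcases arise. I expect establishing these perfect matchings --- typically by reducing to the facts that $P_3\boxtimes\Gamma$ and $K_{2k}\boxtimes\Gamma$ inherit perfect matchings from $\Gamma$, and by handling the fibres over a maximum independent set by hand --- to be the most delicate and case-heavy part of the argument; the structural ingredients listed above are all routine to establish.
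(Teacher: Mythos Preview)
Your sufficiency argument is fine and matches the paper. The necessity plan, however, has a genuine gap that you yourself flag but underestimate: the claim that $(G\boxtimes H)-J$ has a perfect matching when $J$ is (a parity-adjusted subset of) $I_G\times I_H$ is not merely ``case-heavy'' --- it is false for the natural choice of $J$. Take $G=K_{1,5}$ and $H=K_{1,3}$, with $I_G$, $I_H$ the leaf sets. Every vertex $(l_i,l'_j)\in I_G\times I_H$ has neighborhood exactly $\{(c,c'),(c,l'_j),(l_i,c')\}$, so removing $14$ of the $15$ leaf--leaf vertices leaves a $10$-vertex graph in which the single remaining leaf--leaf vertex has degree~$3$ and the other nine vertices form $K_{3,5}$ plus a universal vertex; one checks directly that this has no perfect matching. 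Thus the ``product of maximum independent sets'' heuristic does not produce the required~$J$, and nothing in your outline indicates how to choose $J$ in general so that both $|J|>\mathrm{def}(G\boxtimes H)$ and the complement is perfectly matchable. (The construction that \emph{does} always give a perfectly matchable complement --- the product of the \emph{unmatched} sets of maximum matchings of $G$ and $H$ --- only yields $|J|=\mathrm{def}(G)\,\mathrm{def}(H)$, which need not exceed $\mathrm{def}(G\boxtimes H)$.)

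The paper avoids this difficulty entirely by first invoking the Lesk--Plummer--Pulleyblank classification of $2$-connected equimatchable graphs: since $G\boxtimes H$ is $2$-connected and contains a triangle, it is either an even complete graph (settling the Even case immediately) or factor-critical, so $\mathrm{def}(G\boxtimes H)=1$ in the Odd case. With $\mathrm{def}=1$ in hand, the product-of-unmatched-sets matching already forces $\mathrm{def}(G)=\mathrm{def}(H)=1$, and the rest proceeds by removing a perfect matching from $(G-U)\boxtimes H$ (where $|U|=3$) to reduce to $P_3\boxtimes H$ or $K_2\boxtimes H$, and then, once one factor is complete, using a Gallai--Edmonds argument on the other factor to locate three independent vertices each of whose deletion leaves a perfect matching. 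The structural input from Lesk--Plummer--Pulleyblank is exactly the missing ingredient that makes your deficiency bookkeeping unnecessary.
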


A graph is well-dominated, well-edge-dominated, or equimatchable if and only if all its components are.  A strong product is connected if and only if both factors are connected.  Therefore, throughout the paper we focus on connected graphs.

\section{Well-dominated strong product graphs}

The set of \emph{neighbors} of a vertex~$v$ in~$G$ is denoted as~$N_{G}(v)$, and $N_{G}[v] = N_{G}(v)\cup \{v\}$.
We use $\gamma(G)$ to denote the cardinality of a smallest dominating set of~$G$. 

\begin{proof}[Proof of Theorem~\ref{thm:well-dominated}]
  Let $u_{1}$, $u_{2}$, $\ldots$, $u_{t}$ be a set of vertices such that $\{N[u_{1}], N[u_{2}]$, $\ldots, N[u_{t}]\}$ is a clique partition of $V(G)$. 
  Let $D$ be any minimal dominating set of $G\boxtimes H$. We argue that for each $i = 1, 2, \ldots, t$, the set
  \[	
    D_i=\left\{v\in V(H)\mid
      (u, v)\in D, u\in N_{G}[u_{i}]\right\}
  \]
  is a minimal dominating set of $H$.  Let $v$ be any vertex in $H$.  Since
  \[
    N_{G\boxtimes H}[(u_i, v)]=\left\{(u, v')\mid u\in N_{G}[u_{i}], v'\in N_{H}[v]\right\},
  \]
  we have $D_i\cap N_{H}[v]\neq \emptyset$.
  Thus, $D_i$ is a dominating set of $H$.  For the minimality, suppose for contradiction that there exists some vertex $v\in D_i$ such that $D_i\setminus \left\{v\right\}$ is still a dominating set of $H$.  Then $D\setminus \left\{(u, v)\mid u\in N_{G}[u_{i}]\right\}$ is a dominating set of $G\boxtimes H$, contradicting that $D$ is minimal.  
  
  It follows from the minimality of $D$ that for any $v\in V(H)$ and $i\in\left\{1, 2, \ldots, t\right\}$, $|D\cap (N_{G}[u_{i}]\times \{v\})|\leq 1$.  Thus for any $i\in\left\{1, 2, \ldots, t\right\}$,
  \[
    | D\cap (N_{G}[u_{i}]\times V(H)) |=|D_i|=\gamma(H).
  \] 
  Hence $|D|=t\gamma(H)$. Now that all the minimal dominating sets of $G\boxtimes H$ have the same size, $G\boxtimes H$ is well-dominated. 
\end{proof}

The proof implies a characterization of all minimal dominating sets of well-dominated strong product $G \boxtimes H$ when one factor is a trivially well-dominated graph.

According to
Prisner et al.~\cite{prisner1996well}, all well-dominated chordal graphs are trivially well-dominated.  Thus, they satisfy Theorem~\ref{thm:well-dominated}.
Indeed, the vertices in the definition of trivially well-dominated graphs must be \emph{simplicial} vertices (whose closed neighborhood is a clique).
Note that a trivially well-dominated graph is not necessarily chordal, e.g., the graph obtained from a $C_{6}$ by adding two edges, between the first and the third vertices and between the fourth and the sixth vertices, respectively.

\begin{conjecture}
  Let~$G$ be a well-dominated graph.  If $G$ is not trivially well-dominated, then there exists another well-dominated graph~$H$ such that $G\boxtimes H$ is not well-dominated.
\end{conjecture}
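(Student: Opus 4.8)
Although the conjecture remains open, the plan is to reduce it to a statement about domination numbers only. The key ingredient is that if $D_{G}$ and $D_{H}$ are \emph{minimal} dominating sets of $G$ and $H$ respectively, then $D_{G}\times D_{H}$ is a minimal dominating set of $G\boxtimes H$: it clearly dominates, and minimality follows because, by the minimality of $D_{G}$, every $u\in D_{G}$ is the unique member of $D_{G}$ dominating some vertex $a_{u}$ of $G$, and likewise every $v\in D_{H}$ is the unique member of $D_{H}$ dominating some vertex $b_{v}$ of $H$, so $(a_{u},b_{v})$ is dominated by no member of $D_{G}\times D_{H}$ other than $(u,v)$. Choosing $D_{G}$ and $D_{H}$ of minimum size, $G\boxtimes H$ has a minimal dominating set of size $\gamma(G)\gamma(H)$; hence if $G\boxtimes H$ is well-dominated, then $\gamma(G\boxtimes H)=\gamma(G)\gamma(H)$. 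So it suffices to show: for every well-dominated graph $G$ that is not trivially well-dominated, there is a well-dominated graph $H$ with $\gamma(G\boxtimes H)<\gamma(G)\gamma(H)$, since then $G\boxtimes H$ has a minimal dominating set larger than a minimum one.

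Next I would reinterpret the hypothesis. A set $v_{1},\dots,v_{t}$ as in the definition of trivially well-dominated is precisely an efficient dominating set (perfect code) all of whose vertices are simplicial; so a non-trivially-well-dominated $G$ falls into one of two cases: (a) $G$ has no efficient dominating set, or (b) $G$ has an efficient dominating set but it contains a non-simplicial vertex. I would first try to prove a rigidity lemma — that a well-dominated graph possessing any efficient dominating set is automatically trivially well-dominated (our attempts to construct a counterexample all collapse into graphs that are not well-dominated) — which would dispose of case (b) and leave only case (a).

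The heart of the argument is then to show that a well-dominated graph $G$ with no efficient dominating set admits a well-dominated $H$ — with $H=G$, or $C_{4}$, or $C_{7}$ the natural candidates — for which $\gamma(G\boxtimes H)<\gamma(G)\gamma(H)$. The intuition, already visible in $C_{4}\boxtimes C_{4}$, where $\gamma=3<4=\gamma(C_{4})^{2}$, is that without a perfect code a single vertex of the product can cover the ``holes'' left by two distinct near-diagonal placements, so a correlated (non-product) dominating set beats the product bound. Concretely, one would build, from a minimal dominating set of $G$ together with the overlap pattern of its closed neighbourhoods, a ``staircase'' dominating set of $G\boxtimes G$ of size strictly below $\gamma(G)^{2}$, generalising the short diagonal used for $C_{4}$.

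The main obstacle is the absence of a structural classification of well-dominated graphs: the correlated dominating set in the last step has to be produced by hand from the lone combinatorial input that $G$ has no perfect code, and it is not even clear that $H=G$ always works — the choice of $H$ may have to be adaptive, and in the worst case one might have to abandon the reduction to domination numbers and instead exhibit two minimal dominating sets of $G\boxtimes H$ of different sizes directly. The rigidity lemma, and case (b) if it survives, are secondary but still non-routine hurdles.
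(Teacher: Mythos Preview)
The statement is a \emph{conjecture}, and the paper does not prove it. Immediately after stating it the authors write ``We could not find a proof, but here are some observations,'' and those observations amount to checking by hand that every connected well-dominated graph on at most five vertices which is not trivially well-dominated fails against $C_{4}$ or $C_{5}$. So there is no proof in the paper to compare your proposal against.

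Your first paragraph is correct and is the most useful part of the plan: the product $D_{G}\times D_{H}$ of minimal dominating sets is indeed a minimal dominating set of $G\boxtimes H$ (your private-neighbour argument is sound), so any $H$ with $\gamma(G\boxtimes H)<\gamma(G)\gamma(H)$ witnesses that $G\boxtimes H$ is not well-dominated. This subsumes the paper's $C_{4}\boxtimes C_{4}$ example, where $3<2\cdot 2$. Beyond that, your proposal is openly a research programme rather than a proof, and you flag the gaps yourself. Two of them are genuine: the ``rigidity lemma'' (well-dominated plus efficient dominating set implies trivially well-dominated) is a separate open-looking statement with no proof sketch, and the ``staircase'' construction for case~(a) is only an intuition extrapolated from $C_{4}$. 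You are also right that the domination-number reduction may be too coarse --- equality $\gamma(G\boxtimes H)=\gamma(G)\gamma(H)$ does not preclude $G\boxtimes H$ having a larger minimal dominating set --- so a fallback to exhibiting two minimal dominating sets directly may be needed. In short: nothing here is wrong, but nothing here is a proof either, which matches the paper's own status for this conjecture.
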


We could not find a proof, but here are some observations.
Of connected graphs on at most four vertices, only $K_{1}$, $K_{4}$, $P_{4}$, and~$C_{4}$ are well-dominated, and the first three are trivially well-dominated.  The strong product graph $C_4\boxtimes C_4$ is not well-dominated because
\[
  \gamma(C_4\boxtimes C_4)=3 < \alpha(C_4\boxtimes C_4)=4.
\]
There are six connected well-dominated graphs on five vertices, and only three of them are trivially well-dominated.
For each of the others, the strong product with $C_{4}$ or $C_{5}$ is not well-dominated.

\section{Well-edge-dominated strong product graphs}

For a subset $S\subset V(G)$, we use $G- S$ to denote the subgraph of~$G$ after removing vertices in~$S$ from~$G$.
For a set~$X$ of edges, we use~$V(X)$ to denote the set of their endpoints.
We recall two simple facts on well-edge-dominated graphs, both of which
can be observed from that (1)~every maximal matching is a minimal edge dominating set; and (2)~any matching can be extended to a maximal matching.

\begin{theorem} [\cite{anderson2022well}] \label{le:con}
  Let $M$ be any matching of a graph $G$. If $G$ is well-edge-dominated (resp. equimatchable), then $G- V(M)$ is well-edge-dominated (resp. equimatchable). 
\end{theorem}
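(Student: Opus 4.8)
Let $M$ be any matching of a graph $G$. If $G$ is well-edge-dominated (resp.\ equimatchable), then $G - V(M)$ is well-edge-dominated (resp.\ equimatchable).

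The plan is to prove the contrapositive: starting from a minimal edge dominating set (resp.\ maximal matching) of $G - V(M)$ that is not minimum (resp.\ not maximum), I will lift it to a witness of the same defect in $G$. By induction on $|M|$, it suffices to treat the case $|M| = 1$, say $M = \{e\}$; the general statement then follows by removing the edges of $M$ one at a time, since $G - V(M)$ is obtained from $G$ by a sequence of such single-edge deletions (note $G - V(M') $ for $M' = M \setminus \{e\}$ still contains $e$ as an edge when $e$ is disjoint from $M'$). So fix an edge $e = xy$ and write $G' = G - \{x, y\}$.

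For the well-edge-dominated direction, suppose $G'$ has two minimal edge dominating sets $F_1, F_2$ with $|F_1| < |F_2|$; I claim $F_i \cup \{e\}$ is a minimal edge dominating set of $G$ for $i = 1, 2$. It dominates every edge of $G$: edges within $G'$ are dominated by $F_i$, and every edge incident to $x$ or $y$ is adjacent to $e$. For minimality, observe that $e$ is the unique edge of $F_i \cup \{e\}$ incident to $x$ or $y$, so $e$ cannot be dropped (it alone dominates, e.g., the edge $e$ itself, and more carefully one checks any edge at $x$ or $y$ other than $e$ would be undominated — here one needs that $x$ or $y$ has such an incident edge, but if not, $e$ is still needed to dominate $e$); and no $f \in F_i$ can be dropped because removing $f$ leaves some edge of $G'$ undominated by $(F_i \setminus \{f\}) \cup \{e\}$, as $e$ is disjoint from $V(G')$ and hence dominates no edge of $G'$. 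Thus $G$ has minimal edge dominating sets of sizes $|F_1| + 1 < |F_2| + 1$, so $G$ is not well-edge-dominated.

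For the equimatchable direction the same lifting works even more cleanly: if $N_1, N_2$ are maximal matchings of $G'$ with $|N_1| < |N_2|$, then $N_i \cup \{e\}$ is a matching of $G$ (the edge $e$ is disjoint from $V(G') \supseteq V(N_i)$), and it is maximal because any edge of $G$ not meeting $x$, $y$ lies in $G'$ and is covered by $N_i$'s maximality, while every edge meeting $x$ or $y$ is blocked by $e$. Hence $G$ has maximal matchings of sizes $|N_1| + 1 < |N_2| + 1$ and is not equimatchable.

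The only real subtlety — and the step I would write most carefully — is the minimality argument in the well-edge-dominated case, specifically verifying that $e$ itself is not redundant in $F_i \cup \{e\}$ and that no $f \in F_i$ becomes redundant once $e$ is added. The first point is immediate since $e \notin F_i$ and nothing in $F_i$ is adjacent to $e$, so dropping $e$ leaves $e$ undominated. The second is the crux: $e$ has both endpoints outside $G'$, so it is adjacent to no edge of $G'$; therefore for any $f \in F_i$, the edge that witnessed $f$'s non-redundancy in $F_i$ (an edge of $G'$ dominated only by $f$) is still dominated only by $f$ in $F_i \cup \{e\}$. This is the place where the disjointness of $e$ from $V(G')$ does all the work, and it is exactly why the statement is true — an observation the paper attributes to the two facts that every maximal matching is a minimal edge dominating set and every matching extends to a maximal one, which is essentially the bookkeeping I have just spelled out.
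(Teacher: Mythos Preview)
Your argument is correct. The reduction to $|M|=1$ is harmless but unnecessary: the same lifting works for arbitrary $M$ in one shot, since for any minimal edge dominating set (resp.\ maximal matching) $F$ of $G-V(M)$, the set $F\cup M$ is a minimal edge dominating set (resp.\ maximal matching) of $G$, by exactly the disjointness reasoning you give. Your slightly meandering check that $e$ is not redundant can be shortened to the single observation that $e$ is dominated by nothing in $F_i$ (their vertex sets are disjoint), so dropping $e$ leaves $e$ itself undominated.

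As for comparison with the paper: there is no proof in the paper to compare against. The statement is quoted from \cite{anderson2022well}, and the paper only offers the one-line remark that it ``can be observed from that (1)~every maximal matching is a minimal edge dominating set; and (2)~any matching can be extended to a maximal matching.'' Your write-up is precisely a fleshed-out version of that observation.
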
 

\begin{proposition} [\cite{anderson2022well}] \label{th:per-mat}
  A connected well-edge-dominated graph contains a perfect matching if and only if it is a complete graph on four vertices or a complete bipartite graph with the same number of vertices in both parts.
\end{proposition}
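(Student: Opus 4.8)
The plan is to reduce the forward direction to the classical characterization of connected equimatchable graphs with a perfect matching, and then dispose of the two resulting candidate families directly.

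For the forward direction, suppose $G$ is connected and well-edge-dominated and carries a perfect matching $M$; write $|V(G)| = 2n$, so $|M| = n$. A perfect matching is maximal, hence a minimal edge dominating set, so every minimal edge dominating set of $G$ has exactly $n$ edges. In particular every maximal matching, being a minimal edge dominating set, has $n$ edges, i.e., is perfect; thus $G$ is a connected equimatchable graph with a perfect matching. I would then invoke Sumner's characterization~\cite{summer1979randomly} (equivalently, that of randomly matchable graphs) to conclude that $G$ is $K_{2n}$ or $K_{n,n}$.

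It then remains to rule out $K_{2n}$ for $n \ge 3$ by exhibiting a minimal edge dominating set that is too large. Fix distinct vertices $u, v$ of $K_{2n}$ and take $F = \{vw : w \in V(K_{2n}) \setminus \{u,v\}\}$, a star with $2n-2$ edges. Every edge of $K_{2n}$ has an endpoint different from $u$; that endpoint is either $v$ (so the edge lies in $F$) or is joined to $v$ by an edge of $F$, so $F$ dominates all edges. And $F$ is minimal: deleting $vw$ leaves the edge $uw$ undominated, since every surviving edge of $F$ meets $v$ but not $u$, and the only one meeting $w$ was $vw$. As $2n-2 > n$ for $n \ge 3$, such $K_{2n}$ is not well-edge-dominated, so $G = K_{n,n}$ in that range; for $n \le 2$ the graph $K_{2n}$ is $K_2 = K_{1,1}$ or $K_4$, both already on the list.

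For the reverse direction it remains only to check that $K_4$ and every $K_{n,n}$ are well-edge-dominated, since they plainly have perfect matchings. For $K_{n,n}$ with parts $A$ and $B$, an edge set $F$ dominates every edge exactly when every pair $(a,b) \in A \times B$ has $a \in V(F)$ or $b \in V(F)$, i.e., exactly when $A \subseteq V(F)$ or $B \subseteq V(F)$; assuming the former (the other case is symmetric), if some vertex of $A$ met two edges of $F$ then deleting one of them would keep $A \subseteq V(F)$, hence keep $F$ an edge dominating set, contradicting minimality — so each vertex of $A$ meets exactly one edge of $F$ and $|F| = n$. For $K_4$ a short case check (any three of its edges form a path, a star, or a triangle, each properly containing a two-edge dominating set of $K_4$) shows every minimal edge dominating set has two edges. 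The step I expect to need the most care is the first reduction — ensuring that one perfect matching plus the well-edge-dominated hypothesis really forces \emph{every} maximal matching to be perfect, which is exactly the input Sumner's theorem needs — together with getting the complete-graph bookkeeping right, since the smallest complete graph failing to be well-edge-dominated is $K_6$, which is precisely what leaves $K_4$ (and the trivial $K_2 = K_{1,1}$) on the list.
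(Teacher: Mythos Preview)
The paper does not supply its own proof of this proposition: it is quoted verbatim from~\cite{anderson2022well} and used as a black box. So there is no ``paper's proof'' to compare against; your proposal is a correct self-contained argument filling in what the paper leaves to the cited reference.

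Your reduction is the natural one and matches how the result is derived in the literature: a perfect matching is a minimal edge dominating set, so well-edge-domination pins every maximal matching at size $n$, hence perfect, and Sumner's theorem on randomly matchable graphs gives $K_{2n}$ or $K_{n,n}$. The elimination of $K_{2n}$ for $n\ge 3$ via the near-star $F$ at $v$ missing only $uv$ is clean, and the minimality witness (the edge $uw$ becomes undominated when $vw$ is dropped) is exactly right. The reverse direction is also fine; your cover argument for $K_{n,n}$ (an edge set dominates iff it saturates one side, and minimality then forces a bijection with that side) and the small case analysis for $K_4$ are both valid. One cosmetic point: for $K_4$ you argue that every three-edge set properly contains a two-edge dominating set, which implicitly handles $|F|\ge 4$ as well by passing to a three-edge subset first---you might state that one line explicitly.
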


By definition, a strong product graph has a perfect matching if either of its factors has.  Since the strong product of two connected nontrivial graphs contains a $K_{4}$, it cannot be a complete bipartite graph.  Thus, to prove Theorem~\ref{thm:well-edge-dominated}, it suffices to show that the strong product of two connected graphs without perfect matchings cannot be well-edge-dominated. 

\begin{lemma} \label{lem:chara}
  If the strong product of two nontrivial and connected graphs is well-edge-dominated, then at least one of the factors has a perfect matching.
\end{lemma}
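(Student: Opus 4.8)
The plan is to prove the contrapositive-style statement by strong induction on $\mu(G)+\mu(H)$, where $\mu$ denotes the matching number. Suppose $G\boxtimes H$ is well-edge-dominated but neither $G$ nor $H$ has a perfect matching. Since $G$ has no perfect matching, a maximum matching $M_G$ of $G$ misses at least one vertex $a$; similarly a maximum matching $M_H$ of $H$ misses a vertex $b$. First I would pick such exposed vertices $a\in V(G)$ and $b\in V(H)$ and, using connectivity, take neighbors: $a$ has a neighbor $a'$ in $G$ (with $a'$ necessarily covered by $M_G$, say $a'a''\in M_G$), and similarly $b'$ and $b''$ in $H$. The idea is to build a matching $N$ in $G\boxtimes H$ whose removal, via Theorem~\ref{le:con}, leaves a well-edge-dominated graph that is again a strong product of two connected graphs with strictly smaller matching-number sum and still without perfect matchings in either factor — setting up the induction — while the base case is handled separately and must contradict Proposition~\ref{th:per-mat}.

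Concretely, the reduction step I envision is: take an edge $e_G=a'a''\in M_G$ and an edge $e_H=b'b''\in M_H$, and remove from $G\boxtimes H$ the four vertices $\{a',a''\}\times\{b',b''\}$ using a perfect matching on this $K_4$ (which exists since these four vertices induce a clique in the strong product). What remains is $(G-\{a',a''\})\boxtimes(H-\{b',b''\})$ — this is exactly where the strong product structure is essential, since in the Cartesian product removing such a vertex set does not leave a Cartesian product. The subtlety is that $G-\{a',a''\}$ and $H-\{b',b''\}$ need not be connected, but well-edge-domination passes to each component, and one can argue that some component $G'$ still has no perfect matching (e.g. the component containing the exposed vertex $a$, whose matching number dropped by exactly one so it still misses a vertex) and likewise some component $H'$; then apply induction to $G'\boxtimes H'$ after checking it is nontrivial. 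Care is needed because a component could become trivial (a single vertex), in which case one argues directly: if $G'=K_1$ then $G'\boxtimes H'\cong H'$ is well-edge-dominated and equimatchable with $\alpha$ possibly large, but since $H'$ has no perfect matching we only need that it is well-edge-dominated with an exposed vertex — this should be pushed into the base-case analysis.

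For the base case and the trivial-component case, I would analyze small matching numbers directly. If $\mu(G)+\mu(H)$ is as small as possible given both factors nontrivial and connected (so $\mu(G)=\mu(H)=1$, meaning $G$ and $H$ are stars), then $G\boxtimes H$ is the strong product of two stars; since neither star has a perfect matching unless it is $K_2$, and $K_2$ does have one, both stars have a center of degree $\ge 2$, and one checks $G\boxtimes H$ has a maximal-but-not-maximum matching (hence is not even equimatchable, let alone well-edge-dominated) — for instance by exhibiting a short maximal matching around the "product center" while a longer one exists using the leaves. More generally, the alternative to the induction is to locate a small structure — an induced $K_{2,3}$-like or an induced configuration witnessing failure of equimatchability — directly from the exposed vertices $a,b$ and their neighborhoods.

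I expect the main obstacle to be controlling connectivity and nontriviality after the vertex deletion: ensuring that the leftover factors still contain connected components that (i) are nontrivial, so the induction hypothesis applies to a genuine strong product of two nontrivial connected graphs, and (ii) still lack perfect matchings. Handling the degenerate cases where a factor shrinks to $K_1$ or to a graph that happens to acquire a perfect matching will require separate, careful arguments — likely leveraging Theorem~\ref{le:con} to pass to a well-chosen sub-product and then invoking Proposition~\ref{th:per-mat} to derive the contradiction, or directly exhibiting a non-extendable short matching. A secondary technical point is making the right choice of which edges $e_G\in M_G$ and $e_H\in M_H$ to delete so that the exposed vertices $a$ and $b$ survive in the appropriate components; choosing $e_G$ incident to a neighbor of $a$ (rather than near $a$ itself) is the natural fix, and symmetrically for $H$.
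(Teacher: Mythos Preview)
Your reduction step contains a genuine error. Removing only the four vertices $\{a',a''\}\times\{b',b''\}$ from $G\boxtimes H$ does \emph{not} leave $(G-\{a',a''\})\boxtimes(H-\{b',b''\})$: the residual graph still contains every vertex $(a',v)$ and $(a'',v)$ with $v\notin\{b',b''\}$, and every vertex $(u,b')$, $(u,b'')$ with $u\notin\{a',a''\}$. In cardinalities, you have removed $4$ vertices, whereas the target product has $(|V(G)|-2)(|V(H)|-2)$ vertices, so the two agree only when $|V(G)|=|V(H)|=2$. The structural claim on which the whole induction rests is therefore false as stated. It can be repaired: remove the entire ``cross'' $(\{a',a''\}\times V(H))\cup(V(G)\times\{b',b''\})$ via the matching $\{(a',v)(a'',v):v\notin\{b',b''\}\}\cup\{(u,b')(u,b''):u\in V(G)\}$; then the residual really is $(G-\{a',a''\})\boxtimes(H-\{b',b''\})$. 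After that you still owe a proof that the component of the exposed vertex $a$ in $G-\{a',a''\}$ has no perfect matching (your parenthetical ``matching number dropped by exactly one'' is about the whole graph, not the component), though this is in fact provable by an augmenting argument.

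For comparison, the paper avoids all of these complications by reducing \emph{asymmetrically}: it touches only $H$, deleting the endpoints of all but one edge of a maximum matching of $H$. This directly uses Theorem~\ref{le:con} with the matching $\{(u,v)(u,v'):u\in V(G),\ vv'\in M\setminus\{e\}\}$, and the residual is exactly $G\boxtimes(H-A)$, so the product structure is preserved for free. More importantly, the paper does not try to maintain ``neither factor has a perfect matching'' through the induction. Instead, since $G$ is untouched and still lacks a perfect matching, the inductive hypothesis forces the surviving nontrivial component $H'$ to \emph{have} one; then $G\boxtimes H'$ has a perfect matching, and Proposition~\ref{th:per-mat} together with $|V(G)|\ge 3$ gives the contradiction. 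This sidesteps the trivial-component and no-perfect-matching bookkeeping that your symmetric reduction has to manage.
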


We use $\alpha'(G)$ to denote the cardinality of a maximum matching of~$G$.  Recall that a graph has a perfect matching if $\alpha'(G) = |V(G)|/2$. 
We prove Lemma~\ref{lem:chara} by strong induction on $\alpha'(G)+\alpha'(H)$, where $G$ and $H$ are the factors.
Since both factors are connected, the base case is $\alpha'(G) = \alpha'(H) = 1$,
i.e., when both factors are either a triangle or a star (i.e., a complete bipartite graph $K_{1, n}$ for $n\geq 1$).

\begin{lemma} \label{le:mup}
  Let $G$ and $H$ be two connected graphs.
  If $\alpha'(G)=\alpha'(H)=1$, then
  $G\boxtimes H$ is well-edge-dominated if and only if both $G$ and $H$ are $K_2$.
\end{lemma}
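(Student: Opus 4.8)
The plan is to analyze the base case directly by examining the possible factors. Since both $G$ and $H$ are connected with $\alpha'=1$, each is either a triangle $K_3$ or a star $K_{1,n}$ for some $n\ge 1$ (including $K_2 = K_{1,1}$). The ``if'' direction is immediate: $K_2\boxtimes K_2 = K_4$, which is well-edge-dominated by Proposition~\ref{th:per-mat}. For the ``only if'' direction, I would show that in every other case $G\boxtimes H$ admits a maximal matching that is not maximum (or, equivalently, a minimal edge dominating set that is not minimum), contradicting that it is well-edge-dominated.

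First I would dispose of the cases involving a triangle. If $G = K_3$, then $G\boxtimes H$ contains $K_3\boxtimes K_2 = K_3\boxtimes K_{1,1}$ as an induced subgraph on the six vertices $\{u_1,u_2,u_3\}\times\{v_1,v_2\}$ whenever $H$ has at least one edge; one checks that this $K_3\boxtimes K_2$ (which is $K_6$ minus a perfect matching, i.e.\ the octahedron $K_{2,2,2}$) is \emph{not} well-edge-dominated — it has a maximal matching of size $2$ and a perfect matching of size $3$. To turn this local obstruction into a global one, I would invoke Theorem~\ref{le:con}: take a maximum matching $M$ of $H$ (a single edge, say $v_1v_2$), so that $H - V(M)$ consists of the remaining $|V(H)|-2$ isolated vertices; then one must still argue that $G\boxtimes H$ restricted appropriately reduces to the octahedron. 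A cleaner route: directly exhibit a small maximal matching in $G\boxtimes H$ and a larger one. For $G=K_3$ and $H$ a star $K_{1,n}$ with center $c$, the vertex $(u_1, c)$ dominates a huge clique, and I can cover $\{u_1,u_2,u_3\}\times\{c\}$ with a single edge plus leave many leaf-fibers; balancing these gives a maximal matching strictly smaller than one that uses the diagonal edges more efficiently.

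Next, the case where both factors are stars: $G = K_{1,m}$ with center $a$ and leaves $x_1,\dots,x_m$, and $H = K_{1,n}$ with center $b$ and leaves $y_1,\dots,y_n$, where $(m,n)\ne(1,1)$; say $m\ge 2$. I would describe the structure of $G\boxtimes H$: the vertex $(a,b)$ is universal-ish (adjacent to everything except nothing — actually $(a,b)$ is adjacent to all other vertices), the vertices $(x_i,b)$ and $(a,y_j)$ form a structured set, and the ``corner'' vertices $(x_i,y_j)$ are adjacent only to $(x_i,b)$, $(a,y_j)$, and $(a,b)$. The small maximal matching to exhibit: match $(a,b)$ with $(x_1,y_1)$; this single edge dominates $(a,y_j)$ for all $j$ and $(x_1, y_j)$ for all $j$ and $(x_i,b)$ for all $i$ and — crucially — also dominates $(x_i,y_j)$ for... no, it does not dominate the corner $(x_i, y_j)$ with $i\ne 1$, since that corner's only neighbors are $(x_i,b),(a,y_j),(a,b)$. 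So I would instead build: match each corner $(x_i,y_1)$ for $i\ge 2$ to $(x_i,b)$, match $(a,b)$ to $(a,y_1)$ say, and check that the result is maximal but leaves the corners $(x_i,y_j)$ with $j\ge 2$ unmatched in a way that can be extended — careful bookkeeping is needed here to get the cardinalities to differ. The comparison large matching would use the diagonal edges $(x_i,y_j)$--$(a,b)$ replaced by a near-perfect matching of the dense part.

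The main obstacle I anticipate is the bookkeeping in the all-stars case: stars can be arbitrarily large, so I cannot just check a finite list, and I must give a uniform construction of two maximal matchings of different sizes that works for all $(m,n)\ne(1,1)$. I expect the cleanest argument is to reduce via Theorem~\ref{le:con} to a minimal bad configuration — delete the endpoints of a carefully chosen matching so that what remains is a small fixed graph (such as $K_{1,2}\boxtimes K_{1,1}$ or the octahedron) that one verifies by hand is not well-edge-dominated — rather than exhibiting global matchings directly. So the real work is: (i) identify for each case a matching $M$ in $G\boxtimes H$ whose removal leaves a recognizable small graph, and (ii) verify that small graph (there are only two or three of them) is not well-edge-dominated. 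Step (i) is the delicate part; step (ii) is a routine finite check.
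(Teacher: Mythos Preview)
Your overall plan---classify each factor as a triangle or a star, then reduce via Theorem~\ref{le:con} to a small configuration that is checked by hand---is exactly the paper's strategy. However, there is a genuine error in your triangle case that would derail the argument as written.

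You assert that $K_3\boxtimes K_2$ is ``$K_6$ minus a perfect matching, i.e.\ the octahedron $K_{2,2,2}$'' and that it has a maximal matching of size~$2$. This is wrong: the strong product of complete graphs is complete, so $K_3\boxtimes K_2 = K_6$. (In the strong product, $(u_1,v_1)$ and $(u_2,v_2)$ are adjacent whenever $u_2\in N_G[u_1]$ and $v_2\in N_H[v_1]$, which for complete factors holds for every distinct pair.) Consequently $K_6$ has no maximal matching of size~$2$; it is in fact equimatchable. It does fail to be well-edge-dominated, but for a different reason: it has a minimal edge dominating set of size~$4$ (four edges through a single vertex) versus the perfect matching of size~$3$. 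So your ``local obstruction'' survives, but not via the equimatchability argument you give, and the reduction you sketch (peeling off an edge of $H$ to expose an octahedron) does not produce what you think it does.

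The paper sidesteps this entirely: it first observes that if either factor is $K_2$ then $G\boxtimes H$ has a perfect matching, so Proposition~\ref{th:per-mat} forces $G\boxtimes H=K_4$ and hence both factors are $K_2$. After that, both factors have order at least~$3$; the both-triangles case gives $K_9$ (handled by a star of seven edges at one vertex versus $\alpha'(K_9)=4$), and the remaining star-vs-triangle and star-vs-star cases are each dispatched by one explicit matching $M$ and two explicit minimal edge dominating sets of different sizes in $(G\boxtimes H)-V(M)$. Your instinct to reduce to a fixed small graph is right; what is missing is (i) the clean use of Proposition~\ref{th:per-mat} to eliminate $K_2$ factors up front, and (ii) correct identification of the small graphs that remain, together with the concrete witnesses.
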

\begin{proof}
  The sufficiency is obvious and we focus on the necessity.
  If one of $G$ and $H$ is $K_2$, then $G\boxtimes H$ has a perfect matching and the other must also be $K_2$ by Proposition~\ref{th:per-mat}.  Henceforth, the orders of both $G$ and $H$ are at least three, and we need to show that $G\boxtimes H$ is not well-edge-dominated.
  Note that $K_9$ is not well-edge-dominated because $\alpha'(K_9) = 4$ but it admits a minimal edge dominating set of size seven (any seven edges incident to a single vertex).  Thus, $G$ and $H$ cannot both be $K_{3}$.  Assume without loss of generality that $G$ is not $K_{3}$.
  Then $G$ is $K_{1, n}$ for some $n\geq 2$, and we denote its vertex set as $\{u_{0}, u_1, u_2, \ldots, u_n\}$, where $u_{0}$ is the vertex of degree~$n$.
  By virtue of Theorem~\ref{le:con}, it suffices to construct a matching~$M$ of~$G\boxtimes H$ and show that $(G\boxtimes H)- V(M)$ is not well-edge-dominated.
  
  If $H$ is $K_3$, with vertex set $\{v_1, v_2, v_3\}$, we use
  \[
    M= \{(u_{i}, v_1)(u_{i}, v_2)\mid 1 \le i \le n\}.
  \]
  To see that $(G\boxtimes H) - V(M)$ is not well-edge-dominated, note that $\{(u_0, v_1)(u_0, v_2),
  (u_0, v_3)( u_1, v_3)\}$ and $\{(u_0, v_1)(u_0, v_2), (u_0, v_1)(u_1, v_3), (u_0, v_1)(u_2, v_3),
  \ldots, (u_0, v_1)
  (u_n, v_3)\}$ are two minimal edge dominating sets of $(G\boxtimes H) - V(M)$.
  
  Now that $H$ is $K_{1, t}$ for some $t \geq 2$, let $V(H)=\left\{v_0, v_1, v_2, \ldots, v_t\right\}$, where $v_0$ is the vertex of degree $t$.
  We use the matching
  \[
    M = \{(u_0, v_i)(u_1, v_i)\mid 1 \le i \le t\} \cup \{(u_i, v_0)(u_i, v_1) \mid 3 \le i \le n\}.
  \]
  To see that $(G\boxtimes H) - V(M)$ is not well-edge-dominated, note that $\{(u_0, v_0)(u_2, v_0)\}$ and $\{(u_0, v_0)(u_1, v_0), (u_2, v_0)(u_2, v_1)\}$ are two minimal edge dominating sets of $(G\boxtimes H) - V(M)$. 
\end{proof} 

We are now ready for the main lemma.

\begin{proof}[Proof of Lemma~\ref{lem:chara}]
  Let~$G$ and~$H$ be the factors.
  We proceed by strong induction on $\alpha'(G)+ \alpha'(H)$.
  Since both graphs are connected, $\alpha'(G)+ \alpha'(H) \ge 1 + 1 = 2$.
  We have seen the base case, when $\alpha'(G)+ \alpha'(H)=2$, in Lemma~\ref{le:mup}.
  Now suppose $\alpha'(G)+ \alpha'(H)\ge 3$.
  Suppose for contradiction that $G\boxtimes H$ is well-edge-dominated, but neither $G$ nor $H$ has a perfect matching.
  Assume without loss of generality that $\alpha'(H)\geq 2$.
  We take a maximum matching~$M$ of~$H$, and an arbitrary edge~$e \in M$.
  Let~$A$ be the set of ends of edges in $M\setminus \{e\}$, and let $H^{\prime}$ be a largest component of~$H - A$.
  Note that $|A| = 2 \alpha'(H) - 2 > 0$.
  By Theorem~\ref{le:con}, both $G\boxtimes (H - A)$ and $G\boxtimes H'$ are well-edge-dominated, since $G\boxtimes H'$ is a connected component of $G\boxtimes (H - A)$.
  
  Since the edge~$e$ remains in~$H - A$, the order of $H^{\prime}$ is at least two.
  Since any matching of~$H'$ can be augmented to a matching of~$H$ by adding the edges in~$M\setminus \{e\}$, we have $\alpha'(H') < \alpha'(H)$. 
  Thus, $\alpha'(G)+\alpha'(H^{\prime})< \alpha'(G) + \alpha'(H)$, and $H'$ must have a perfect matching by the inductive hypothesis.
  But then $G\boxtimes H'$ has a perfect matching, and it has to be $K_4$ or $K_{n, n}, n\geq 1$, by Proposition~\ref{th:per-mat}.
  Since $|V(G)| > 2$, the order of~$G\boxtimes H'$ is at least six.  By definition, $G\boxtimes H'$ contains a triangle.  Thus, we end with a contradiction. 
\end{proof}

\section{Equimatchable strong product graphs}

The following is immediate from the definition of strong product graphs.
\begin{proposition}\label{lem:connectivity}
  The strong product of two nontrivial and connected graphs is 2-connected.
\end{proposition}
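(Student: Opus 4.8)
The plan is to verify the two defining requirements of 2-connectivity directly: that $G\boxtimes H$ has at least three vertices, and that deleting any single vertex leaves it connected. The first is immediate, since both factors are nontrivial and hence $|V(G\boxtimes H)| = |V(G)|\cdot|V(H)| \ge 4$. (Recall also that ``deleting any vertex leaves a connected graph'' already implies $G\boxtimes H$ itself is connected: given vertices $x,y$ and any third vertex $z$, an $x$--$y$ path in $(G\boxtimes H) - z$ is also a path in $G\boxtimes H$. So it suffices to treat vertex deletions.)

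The structural facts I would use are that, for each fixed $v\in V(H)$, the set $V(G)\times\{v\}$ induces a copy of $G$ in $G\boxtimes H$ (a ``column''), and for each fixed $u\in V(G)$, the set $\{u\}\times V(H)$ induces a copy of $H$ (a ``row''); both are connected since $G$ and $H$ are. Now fix a vertex $(u_0,v_0)$ to delete. Since $G$ and $H$ are nontrivial, choose $u'\ne u_0$ in $G$ and $v'\ne v_0$ in $H$. Then the row $\{u'\}\times V(H)$ and every column $V(G)\times\{v\}$ with $v\ne v_0$ avoid $(u_0,v_0)$ entirely, so they survive the deletion as connected subgraphs.

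It then remains to show that every vertex $(a,b)\ne(u_0,v_0)$ is joined to the fixed vertex $(u',v')$ by a path in $(G\boxtimes H)-(u_0,v_0)$. If $b\ne v_0$, route $(a,b)$ to $(u',b)$ inside the intact column $V(G)\times\{b\}$, and then $(u',b)$ to $(u',v')$ inside the intact row $\{u'\}\times V(H)$. If $b=v_0$, then necessarily $a\ne u_0$, so the row $\{a\}\times V(H)$ is intact; route $(a,v_0)$ to $(a,v')$ there, and then apply the previous case to $(a,v')$. Since all surviving vertices reach a common vertex, $(G\boxtimes H)-(u_0,v_0)$ is connected, and as $(u_0,v_0)$ was arbitrary, $G\boxtimes H$ is 2-connected.

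There is essentially no serious obstacle here; the only points requiring care are (i) ensuring the rows and columns invoked genuinely miss $(u_0,v_0)$, which is precisely why we pick $u'\ne u_0$ and $v'\ne v_0$, and (ii) noting at the outset that proving ``every single-vertex deletion stays connected'' subsumes the connectivity of $G\boxtimes H$ needed for the definition. Both are routine.
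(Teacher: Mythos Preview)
Your proof is correct, but it proceeds differently from the paper's. The paper uses the Menger-type characterization of 2-connectivity: for any two vertices $(u_1,v_1)$ and $(u_2,v_2)$ it picks a $u_1$--$u_2$ path $P$ in $G$ and a $v_1$--$v_2$ path $Q$ in $H$, and then exhibits the two internally disjoint routes $(P\times\{v_1\})\cup(\{u_2\}\times Q)$ and $(\{u_1\}\times Q)\cup(P\times\{v_2\})$. You instead verify directly that no vertex is a cut vertex, by anchoring at a fixed $(u',v')$ and routing every surviving vertex to it through an intact row followed by an intact column. Your argument is slightly more self-contained (no appeal to the disjoint-paths characterization) and handles the degenerate cases where the two target vertices share a coordinate without further comment; the paper's construction is a touch shorter once one grants that equivalence, though as written it needs a word about those degenerate cases.
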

\begin{proof}
        Let $G$ and $H$ be the factors. It suffices to prove that for any pair of vertices $(u_1, v_1)$ and $(u_2, v_2)$ of $G\boxtimes H$, there are two vertex-disjoint paths between them. Since $G$ and $H$ are connected, let $P$ be one path from $u_1$ to $u_2$ in $G$, $Q$ be one path from $v_1$ to $v_2$ in $H$. Then $(P\times \{v_1\})\cup (\{u_2\}\times Q)$ and $(\{u_1\}\times Q)\cup (P\times \{v_2\})$ are two vertex-disjoint paths between $(u_1, v_1)$ and $(u_2, v_2)$ in $G\boxtimes H$.   		
\end{proof}

Lesk et al.~\cite{lesk1984equi} characterized equimatchable graphs that are 2-connected.
We use~$G- v$ as a shorthand for~$G- \{v\}$.
A graph~$G$ is \emph{factor-critical} if~$G- v$ has a perfect matching for every vertex~$v\in V(G)$.  

\begin{theorem}[\cite{lesk1984equi}] \label{le:2-connected}
  A 2-connected equimatchable graph is either factor-critical or bipartite or a complete graph of even order.
\end{theorem}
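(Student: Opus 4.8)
The plan is to run the argument through the \emph{Gallai--Edmonds decomposition}. Let $D=D(G)$ be the set of vertices of $G$ that some maximum matching misses, let $A=A(G)=N(D)\setminus D$, and let $C=C(G)=V(G)\setminus(D\cup A)$. Recall the structure theorem: every component of $G[D]$ is factor-critical; $G[C]$ has a perfect matching; and every maximum matching of $G$ restricts to a perfect matching on $G[C]$, to a near-perfect matching (one covering all but one vertex) on each component of $G[D]$, and to a matching saturating $A$ that sends each vertex of $A$ into a distinct component of $G[D]$. In particular $G$ has a perfect matching if and only if $D=\emptyset$, and otherwise $c(G[D])>|A|$, where $c(\cdot)$ counts components. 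I would split on whether $A$ is empty.

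Suppose first that $A=\emptyset$. If also $D=\emptyset$, then $G$ is a connected equimatchable graph with a perfect matching, so by the characterization of such graphs~\cite{summer1979randomly} it is $K_{2n}$ or $K_{n,n}$, as wanted. If $D\neq\emptyset$, then $N(D)\setminus D=\emptyset$ together with connectedness forces $V(G)=D$, so $G=G[D]$ is connected, a single component, hence factor-critical.

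Now suppose $A\neq\emptyset$, so that $D\neq\emptyset$ too; the target is that $G$ is bipartite. The crucial claim is that $G-A$ is edgeless; equivalently, $C=\emptyset$ and every component of $G[D]$ is a single vertex. Granting it, $V(G)$ is partitioned into $A$ and the independent set $D$, and every maximum matching of $G$ has exactly $|A|$ edges, saturating $A$ and leaving uncovered only vertices of $D$. Then $A$ must itself be independent: if $a_1a_2\in E(G)$ with $a_1,a_2\in A$, extend $\{a_1a_2\}$ to a maximal---hence, by equimatchability, maximum---matching $M$; since $D$ is independent, every one of the $|A|$ edges of $M$ has an endpoint in $A$, but $a_1a_2$ contributes two, so $M$ would have more than $|A|$ distinct endpoints in $A$, which is impossible. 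Hence $A$ is independent, and since every edge of $G$ meets $A$, the graph $G$ is bipartite with parts $A$ and $V(G)\setminus A$.

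It remains to prove the claim, and I expect this to be the main obstacle, for it is exactly where 2-connectedness is indispensable: the trichotomy genuinely fails for merely connected equimatchable graphs---for instance $K_4$ with a pendant edge is equimatchable yet is neither factor-critical, bipartite, nor a complete graph of even order, and in the Gallai--Edmonds picture it is precisely a graph whose $A$ is a single cut vertex and one of whose $G[D]$-components is a triangle. Assume for contradiction that $G-A$ has an edge; it lies inside a component $B$ that is either a component of $G[C]$ (so $B$ has a perfect matching) or a component of $G[D]$ (so $B$ is factor-critical of order at least three). Every neighbor of $B$ outside $B$ lies in $A$ (there are no edges between $C$ and $D$, nor between distinct components of $G[D]$), and 2-connectedness forces $B$ to have at least two such neighbors, say $a,a'\in A$, because a single such neighbor would be a cut vertex of $G$ separating $B$ from a remainder that is nonempty, since $D\setminus B\neq\emptyset$ (as $D\neq\emptyset$ and $c(G[D])\ge 2$). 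The plan is then to take a perfect (resp.\ near-perfect) matching of $B$, reroute it through $a$ and $a'$, and extend it to a maximal matching of $G$ covering fewer than $2\,\alpha'(G)$ vertices---a maximal matching that is not maximum---contradicting equimatchability; equivalently, by Theorem~\ref{le:con} it suffices to exhibit a matching $M'$ of $G$ with $G-V(M')$ one of a few small non-equimatchable graphs, such as $P_4$. Pinning down which vertices of $B$ to leave exposed and controlling which further vertices the extension absorbs is the delicate point; it is the technical core of the argument of Lesk et al.
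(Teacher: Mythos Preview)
The paper does not supply its own proof of this theorem: it is quoted verbatim from Lesk, Plummer, and Pulleyblank~\cite{lesk1984equi} and used as a black box, so there is no in-paper argument to compare your attempt against.

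As for the proposal itself, the overall architecture via the Gallai--Edmonds decomposition is the standard one, and your handling of the easy cases is correct: when $A=\emptyset$ you either get $V(G)=D$ and hence factor-criticality, or $D=\emptyset$ and Sumner's characterization of randomly matchable graphs yields $K_{2n}$ or $K_{n,n}$; and once the claim that $G-A$ is edgeless is granted, your counting argument that $A$ is independent is clean. The genuine gap is exactly where you flag it: the claim that $C=\emptyset$ and every component of $G[D]$ is a singleton is not proved, only outlined. Your plan---pick a nontrivial component $B$, use 2-connectedness to find two attachment vertices $a,a'\in A$, and build a maximal non-maximum matching by rerouting a (near-)perfect matching of $B$ through $a,a'$---is the right shape, but as written it is not a proof: you have not specified how to choose the exposed vertices in $B$, nor how to control what happens in the other components of $G[D]$ and in $C$ when you greedily extend, and both of these require real work (this is, as you say, the technical core of~\cite{lesk1984equi}). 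So the proposal is a sound blueprint with the central lemma left unproved, rather than a complete alternative argument.
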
	

By definition, the order of a factor-critical graph must be odd.  The following is immediate from Theorem~\ref{le:2-connected} and Proposition~\ref{lem:connectivity}.

\begin{corollary}\label{cr:equimatchable}
  Let~$G$ and~$H$ be two nontrivial and connected graphs.
  If~$G\boxtimes H$ is equimatchable, then
  \begin{description}
  \item[Even] it is complete when~$G\boxtimes H$ is even; or
  \item[Odd] it is factor-critical when~$G\boxtimes H$ is odd.
  \end{description}
\end{corollary}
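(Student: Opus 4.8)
The plan is to derive this directly from the two preceding results together with one elementary observation about the structure of strong products. First I would apply Proposition~\ref{lem:connectivity} to get that $G\boxtimes H$ is 2-connected, so that the classification of 2-connected equimatchable graphs (Theorem~\ref{le:2-connected}) applies: $G\boxtimes H$ is factor-critical, bipartite, or a complete graph of even order. The remaining work is to discard the bipartite case and then split according to the parity of the order.

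To rule out bipartiteness, I would exhibit a $K_4$ subgraph. Since $G$ and $H$ are nontrivial and connected, each contains an edge, say $u_1u_2\in E(G)$ and $v_1v_2\in E(H)$; then the four vertices $(u_i,v_j)$ with $i,j\in\{1,2\}$ are pairwise adjacent in $G\boxtimes H$ by the definition of the strong product, so they induce a $K_4$. In particular $G\boxtimes H$ contains a triangle and hence is not bipartite. Consequently $G\boxtimes H$ is either factor-critical or a complete graph of even order.

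It then remains to read off the dichotomy from the parity of $|V(G\boxtimes H)|=|V(G)|\cdot|V(H)|$. If $G\boxtimes H$ is even, it cannot be factor-critical, since a factor-critical graph has odd order by definition; hence it must be the remaining alternative, a complete graph of even order, i.e., complete. If $G\boxtimes H$ is odd, it cannot be a complete graph of even order, hence it must be factor-critical.

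Since each step is either a direct appeal to a cited result or an immediate consequence of the definition of the strong product, there is no substantive obstacle here; the only point that needs (minimal) care is confirming the $K_4$ subgraph, which is precisely what makes the bipartite alternative of Theorem~\ref{le:2-connected} vacuous in this setting.
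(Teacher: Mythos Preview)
Your argument is correct and is exactly the route the paper intends: it declares the corollary ``immediate from Theorem~\ref{le:2-connected} and Proposition~\ref{lem:connectivity},'' and the one extra ingredient you supply---the $K_4$ in $G\boxtimes H$ ruling out the bipartite alternative---is precisely the observation the paper records earlier. There is nothing to add.
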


This settles the even case of Theorem~\ref{thm:equimatchable}.
The rest is focused on the odd case.
The sufficiency follows from the observation of Akbari et al.~\cite{akbari2018equimatchable}.
\begin{proposition} [\cite{akbari2018equimatchable}] \label{le:odd order}
  Let $G$ be a nontrivial and connected graph of odd order. If $\alpha(G) \le 2$, then $G$ is equimatchable.
\end{proposition}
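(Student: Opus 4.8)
The plan is to control, for an arbitrary maximal matching $M$ of $G$, the set $U$ of vertices left unsaturated by $M$, and to show that $|U|=1$ no matter which maximal matching is chosen. Once this is established, every maximal matching has size exactly $(|V(G)|-1)/2$, so all maximal matchings have the same cardinality; in particular every maximal matching is maximum, which is precisely the assertion that $G$ is equimatchable.

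The first step is the observation that $U$ is an independent set: if two vertices of $U$ were adjacent, the edge joining them could be added to $M$, contradicting the maximality of $M$. Hence $U$ is an independent set of $G$, and so $|U|\le\alpha(G)\le 2$ by hypothesis.

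The second (and essentially only other) step is a parity count. Since $M$ saturates exactly $2|M|$ vertices, we have $|U| = |V(G)| - 2|M|$, so $|U|$ and $|V(G)|$ have the same parity. As $|V(G)|$ is odd and $0\le |U|\le 2$, this forces $|U| = 1$, and therefore $|M| = (|V(G)|-1)/2$. This value does not depend on $M$, so all maximal matchings of $G$ have the same size, i.e., $G$ is equimatchable.

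I do not expect a real obstacle here: the whole argument consists of the two short observations above — the independence of the unsaturated set and the parity of its size — and involves no case analysis. (Note that the argument uses neither connectivity nor nontriviality of $G$; those hypotheses are carried along only because the proposition will be invoked in that setting.)
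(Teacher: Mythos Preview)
Your argument is correct: the unsaturated set of any maximal matching is independent, hence has size at most $\alpha(G)\le 2$, and parity forces it to have size exactly~$1$, so every maximal matching is near-perfect. The paper does not supply its own proof of this proposition---it is quoted from Akbari et al.\ \cite{akbari2018equimatchable}---so there is nothing further to compare; your two-line proof is the standard one.
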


Recall that for a matching~$M$, the set of endpoints of all the edges in~$M$ is denoted as $V(M)$.
A matching~$M$ of a graph $G$ is \emph{near-perfect} if~$|V(M)| = |V(G)| - 1$.

The smallest odd, nontrivial, and connected graph are~$P_3$ and~$K_3$.
\begin{proposition}\label{lem:p3}
  Let~$G$ be an odd, nontrivial, and connected graph.  The strong product~$G\boxtimes P_3$ is equimatchable if and only if~$G$ is a complete graph.
\end{proposition}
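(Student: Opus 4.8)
The plan is to treat the two directions separately; the ``if'' direction is immediate, and the substance lies in the ``only if'' direction. For the ``if'' direction I would observe that when $G$ is complete, $\alpha(G\boxtimes P_3)=2$: writing $1,2,3$ for the vertices of $P_3$ in path order, two distinct vertices $(u,i)$ and $(u',i')$ can fail to be adjacent in $G\boxtimes P_3$ only if $i\neq i'$ and $ii'\notin E(P_3)$ (any two distinct vertices of $G$ being adjacent), which forces $\{i,i'\}=\{1,3\}$, so no three vertices are pairwise non-adjacent. Since $G\boxtimes P_3$ is nontrivial, connected, and of odd order, Proposition~\ref{le:odd order} then gives that it is equimatchable.

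For the ``only if'' direction I would prove the contrapositive: a connected, odd-order, non-complete $G$ yields a maximal matching of $G\boxtimes P_3$ that is not maximum. Such a $G$ has an induced path $a$--$c$--$b$ ($a\sim c$, $b\sim c$, $a\not\sim b$), and I would repeatedly use that for non-adjacent $x,y$ and arbitrary $z$ in $V(G)$ the triple $\{(x,1),(y,1),(z,3)\}$ is independent in $G\boxtimes P_3$ (there is no edge of $G\boxtimes P_3$ between the first and third copies of $G$, and $x\not\sim y$). If $G$ has no near-perfect matching, a maximum matching $M$ of $G$ misses an independent set $J$ of size $|V(G)|-2\alpha'(G)\ge 3$; lifting $M$ to all three copies of $G$ and adding the edges $(u,1)(u,2)$ for $u\in J$ gives a maximal matching of $G\boxtimes P_3$ whose uncovered set is the independent set $J\times\{3\}$, of size $\ge 3$. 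Since $G\boxtimes P_3$ is $2$-connected (Proposition~\ref{lem:connectivity}) and of odd order, if it were equimatchable it would be factor-critical by Corollary~\ref{cr:equimatchable}, hence have a matching missing only one vertex, contradicting maximality of the matching just built; so it is not equimatchable.

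The remaining case is that $G$ has a near-perfect matching $M_G$, missing a vertex $w$. Here lifting $M_G$ to all three copies and adding $(w,1)(w,2)$ yields a matching of $G\boxtimes P_3$ missing only $(w,3)$, so $\alpha'(G\boxtimes P_3)=(3\,|V(G)|-1)/2$ and it suffices to build a maximal matching missing at least three vertices. I would aim for uncovered set exactly $\{(x,1),(y,1),(z',3)\}$, produced from vertices with $x\not\sim y$, $v\in N_G(x)$, $v'\in N_G(y)$, $v\neq v'$, $z'\notin\{v,v'\}$, and $G-\{z',v,v'\}$ having a perfect matching $N$: the edges $(u,1)(u,2)$ over $u\in V(G)\setminus\{x,y\}$, the two edges $(x,2)(v,3)$ and $(y,2)(v',3)$, and the lift of $N$ to the third copy form a matching covering everything except that (independent) triple, so it is a maximal matching with exactly three uncovered vertices. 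To find these vertices one uses $M_G$: deleting an edge $st$ from $M_G$ makes $G-\{w,s,t\}$ perfectly matched, so it is enough that for some edge $st\in M_G$ one of $\{s,t\},\{w,s\},\{w,t\}$ can play the role of $\{v,v'\}$, i.e.\ is reached by a suitable non-edge of $G$.

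I expect this last point --- showing that in every connected, non-complete $G$ with a near-perfect matching a suitable choice of $x,y,v,v',z'$ exists --- to be the main obstacle. It fails for the naive choices in degenerate configurations (such as $P_3$ itself and the friendship graphs), where one must either exploit the extra freedom in choosing $z'$ (it need not be $w$) or fall back on the induced $P_3$ $a$--$c$--$b$: inserting $(a,2)(c,2)$ and $(b,2)(b,3)$ reduces the problem to showing that $\bigl(G[V(G)\setminus\{a,b\}]\boxtimes P_3\bigr)-(c,2)$ has a perfect matching, after which $\{(a,1),(a,3),(b,1)\}$ is the uncovered triple. Establishing the relevant dichotomy, and proving that perfect-matching claim (essentially by bounding the number of pendant-like blocks of $G[V(G)\setminus\{a,b\}]$ adjacent to $c$'s block), is where the real work goes; the rest is routine bookkeeping inside the three copies of $G$.
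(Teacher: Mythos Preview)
Your ``if'' direction is correct and matches the paper's. The ``only if'' direction, however, has a genuine gap that you yourself flag: in the near-perfect-matching case you never establish that the parameters $x,y,v,v',z'$ (or the fallback data) exist. In fact your fallback claim is false for the very example you cite. For $G=P_3$ with induced path $a\text{--}c\text{--}b$, the graph $G[V(G)\setminus\{a,b\}]\boxtimes P_3$ is just the path $(c,1)\text{--}(c,2)\text{--}(c,3)$, and deleting $(c,2)$ leaves two isolated vertices, so it has no perfect matching. Your primary template also fails here as written: the only non-edge of $P_3$ is $\{a,b\}$, and $N_G(a)=N_G(b)=\{c\}$, so $v=v'=c$ is forced, violating $v\ne v'$. (Allowing $v\in N_G[x]$ would rescue $P_3$, but you would still owe a proof that suitable choices exist for every non-complete $G$ with a near-perfect matching, and your outlined ``dichotomy'' does not do this.) So the plan, as it stands, does not close.

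The paper avoids all of this with a single, case-free construction built from a \emph{maximum independent set} $I$ of $G$ rather than from a matching of $G$. Writing $P_3=v_1v_2v_3$ and taking any maximal matching $M_G$ of $G-I$, set
\[
M=\{(u,v_1)(u,v_2):u\notin I\}\ \cup\ \{(u,v_2)(u,v_3):u\in I\}\ \cup\ \{(u_i,v_3)(u_j,v_3):u_iu_j\in M_G\}.
\]
This is a matching whose uncovered set is $\bigl(I\times\{v_1\}\bigr)\cup\bigl((V(G)\setminus(I\cup V(M_G)))\times\{v_3\}\bigr)$, which is independent, so $M$ is maximal. If $G\boxtimes P_3$ were equimatchable it would be factor-critical (Corollary~\ref{cr:equimatchable}), forcing $|I|\le 1$ and hence $G$ complete. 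The moral is that choosing $I$ first gives you $|I|$ uncovered vertices in one stroke, with no need to locate special non-edges or to split on whether $G$ has a near-perfect matching.
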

\begin{proof}
  For the sufficiency, suppose that~$G$ is a complete graph.  Since~$\alpha(G\boxtimes P_3)=2$, the graph~$G\boxtimes P_3$ is equimatchable by Proposition~\ref{le:odd order}.
  
  For the necessity, suppose that~$G\boxtimes P_3$ is equimatchable.
  Let~$P_3$ be the path~$v_1 v_2 v_3$.
  We take a maximum independent set~$I$ of~$G$, and a maximal matching~$M_{G}$ of~$G - I$.
  Let
  \begin{align*}
    M_1 =& \{(u, v_1)(u, v_2)\mid u\in V(G)\setminus I\},
    \\
    M_2 =& \{(u, v_2)(u, v_3)\mid u\in I\},
    \\
    M_3 =& \{(u_{i}, v_3)(u_{j}, v_3)\mid u_{i} u_{j}\in M_{G}\}, 
  \end{align*}
  and $M = M_1\cup M_2\cup M_3$.
  By construction, $M$ is a matching of~$G\boxtimes P_3$.
  On the other hand, $M$ is maximal because
  \[
    V(G\boxtimes P_3)\setminus V(M) = I \times \{v_1\} \cup  (V(G)\setminus (V(M_{G})\cup I))\times \{v_{3}\}
  \]
  is an independent set.
  Since $G\boxtimes P_3$ is equimatchable and factor-critical (by Proposition~\ref{le:2-connected}), $|V(M)| = |V(G\boxtimes P_3)| - 1$.
  We have
  \[
    \alpha(G) = |I| = |I\times \{v_1\}| \le |V(G\boxtimes P_3)\setminus V(M)| = 1.
  \]
  This concludes the proof.
\end{proof}

\begin{proposition}\label{lem:K3}
  Let $G$ be an odd graph.
  If there exists an independent set $S$ of size three in $G$ such that $G - v$ has a perfect matching for any $v\in S$,
  then~$G\boxtimes K_3$ is not equimatchable.
\end{proposition}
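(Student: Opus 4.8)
The plan is to exhibit a single maximal matching of $G \boxtimes K_3$ that is not maximum, thereby violating equimatchability. The natural way to do this, given the hypothesis, is to use the independent set $S = \{s_1, s_2, s_3\}$ and, crucially, the three perfect matchings $M^{(1)}, M^{(2)}, M^{(3)}$ of $G - s_1$, $G - s_2$, $G - s_3$ that come with it. Write $K_3$ on vertices $\{v_1, v_2, v_3\}$. The idea is to place, for each $i$, a copy of the perfect matching $M^{(i)}$ of $G - s_i$ inside the layer $V(G) \times \{v_i\}$ — that is, use edges $(u, v_i)(w, v_i)$ for $uw \in M^{(i)}$. This covers every vertex of $V(G) \times \{v_i\}$ except $(s_i, v_i)$. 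So the union $M := \bigcup_{i=1}^3 \{(u,v_i)(w,v_i) \mid uw \in M^{(i)}\}$ is a matching of $G \boxtimes K_3$ (the three layers are vertex-disjoint), and the set of vertices it misses is exactly $U := \{(s_1, v_1), (s_2, v_2), (s_3, v_3)\}$.

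The key claim is then that $M$ is maximal, i.e.\ that $U$ induces an independent set in $G \boxtimes K_3$. Two vertices $(s_i, v_i)$ and $(s_j, v_j)$ with $i \neq j$ are adjacent in $G \boxtimes K_3$ iff $s_i \in N_G[s_j]$ and $v_i \in N_{K_3}[v_j]$; the second condition always holds in $K_3$, but $s_i \in N_G[s_j]$ fails precisely because $S$ is independent (and $s_i \neq s_j$). Hence no two vertices of $U$ are adjacent, so $M$ is a maximal matching. On the other hand $|V(M)| = |V(G \boxtimes K_3)| - 3$, so $M$ leaves three vertices uncovered, whereas $G \boxtimes K_3$ is 2-connected (Proposition~\ref{lem:connectivity}) and odd of order $3|V(G)|$; if it were equimatchable it would be factor-critical by Theorem~\ref{le:2-connected}, so every maximal matching would be near-perfect, leaving exactly one vertex uncovered. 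This contradiction shows $G \boxtimes K_3$ is not equimatchable.

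The main thing to get right — and the only real obstacle — is verifying that $M$ as defined really is a \emph{matching}: since all edges used lie within the layers $V(G)\times\{v_i\}$ and these layers are pairwise disjoint vertex sets, this is immediate, so in fact the argument is essentially friction-free once the construction is in place. One small point worth stating explicitly is that $|V(G)|$ is odd, so $G - s_i$ has even order and the perfect matchings $M^{(i)}$ indeed exist by hypothesis; and that $G \boxtimes K_3$ being odd is what lets us invoke the factor-critical case of Theorem~\ref{le:2-connected}. I would also note for emphasis that we never needed the layer matchings to interact across layers — the edges of type (ii) and (iii) in the strong product are not used at all — which is what makes the construction robust. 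Altogether the proof is short: define $M$, check it is a matching (trivial from disjoint layers), check $U = V \setminus V(M)$ is independent (from independence of $S$), and conclude via Theorem~\ref{le:2-connected} that a three-vertex-deficient maximal matching is incompatible with equimatchability.
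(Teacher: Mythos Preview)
Your proof is correct and follows essentially the same approach as the paper: construct a maximal matching by placing the perfect matching $M^{(i)}$ of $G - s_i$ in the layer $V(G) \times \{v_i\}$, observe that the three uncovered vertices $\{(s_i,v_i)\}$ form an independent set because $S$ is independent in $G$, and conclude via factor-criticality that equimatchability fails. The paper's argument is identical, invoking Corollary~\ref{cr:equimatchable} (which packages Proposition~\ref{lem:connectivity} and Theorem~\ref{le:2-connected}) for the final step where you invoke those two results directly.
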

\begin{proof}
  Let~$S = \{u_1, u_2, u_3\}$, and the three vertices in~$K_{3}$ are~$\{v_1, v_2, v_3\}$.
  By assumption, for $i = 1, 2, 3$, there is a perfect matching~$M_{i}$ of~$G - \{u_{i}\}$.
  Then
  \[
    M = \bigcup_{i=1}^{3}
    \{(u_{j}, v_{i})(u_{k}, v_{i})\mid u_{j} u_{k}\in M_{i}\}
  \]
  is a matching of~$G\boxtimes K_3$.
  It is maximal because the only three vertices not in~$V(M)$ are $\{(u_{i}, v_{i})\mid 1\le i \le 3\}$.
  Now that~$G\boxtimes K_3$ has a maximal matching that is not near-perfect, it cannot be equimatchable by Corollary~\ref{cr:equimatchable}.
\end{proof}

As one may expect, a necessary for a strong product to be equimatchable is that both factors are.  It is actually stronger: both have near-perfect matchings.

\begin{lemma} \label{lem:equimatchable}
  Let $G$ and $H$ be nontrivial and connected graphs of odd order. If $G\boxtimes H$ is equimatchable, then $G$ and $H$ are equimatchable and have near-perfect matchings.
\end{lemma}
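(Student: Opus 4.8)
The plan is to combine Corollary~\ref{cr:equimatchable} with an explicit matching construction. Since $G$ and $H$ both have odd order, $G\boxtimes H$ has odd order, so if it is equimatchable then it is factor-critical; in particular every maximal matching of $G\boxtimes H$ is near-perfect. I would therefore take an arbitrary maximal matching $M_H$ of $H$, put $U := V(H)\setminus V(M_H)$, and extend the edge set $\{(u,v_i)(u,v_j)\mid u\in V(G),\ v_iv_j\in M_H\}$ to a maximal matching of $G\boxtimes H$ whose set of uncovered vertices has size at least $|U|$. Since $|V(H)|$ is odd, $|U|$ is odd and hence nonempty; forcing the uncovered set of a maximal matching of $G\boxtimes H$ down to a single vertex will then pin $|U|$ to $1$.

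The key structural point is that, $U$ being independent in $H$ (by maximality of $M_H$), the subgraph of $G\boxtimes H$ induced on $V(G)\times U$ splits as the disjoint union of the layers $V(G)\times\{v\}$, $v\in U$, each of which is a copy of $G$: inside a layer the adjacency is exactly that of $G$, while between distinct layers $v\ne v'$ there is no edge because $v\notin N_H[v']$. So I would pick, for each $v\in U$, a maximal matching $M_G^{(v)}$ of the layer $V(G)\times\{v\}$, and set
\[
  M \;=\; \bigl\{(u,v_i)(u,v_j)\mid u\in V(G),\ v_iv_j\in M_H\bigr\}\ \cup\ \bigcup_{v\in U} M_G^{(v)}.
\]
The first family covers exactly $V(G)\times V(M_H)$ and the second covers part of $V(G)\times U$, so $M$ is a matching; it is maximal because every vertex it leaves uncovered lies in some layer $V(G)\times\{v\}$, where $M_G^{(v)}$ is maximal, and vertices in distinct layers are nonadjacent.

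By Corollary~\ref{cr:equimatchable} the maximal matching $M$ is near-perfect, so $V(G\boxtimes H)\setminus V(M)$ consists of a single vertex. But this set equals $\bigcup_{v\in U}\bigl(V(G)\times\{v\}\setminus V(M_G^{(v)})\bigr)$, a disjoint union of $|U|$ nonempty sets, since each layer is a copy of the odd-order graph $G$ and hence each $M_G^{(v)}$ misses at least one vertex. Therefore $|U|\le 1$, and so $|U|=1$. As $M_H$ was an arbitrary maximal matching of $H$, every maximal matching of $H$ misses exactly one vertex, which is precisely the assertion that $H$ is equimatchable and has a near-perfect matching; by symmetry the same holds for $G$. (Equivalently, once $|U|=1$ the unique surviving $M_G^{(v)}$ is a near-perfect matching of $G$, and it was an arbitrary maximal matching of a copy of $G$.) I do not anticipate a serious obstacle: the delicate points are just the verification that $M$ is maximal --- where the independence of $U$ and the layered structure are essential --- and the parity remark that upgrades $|U|\le 1$ to $|U|=1$. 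The matching $M$ is essentially the gadget already used in Propositions~\ref{lem:p3} and~\ref{lem:K3}, so the genuinely new ingredient is the counting step converting a near-perfect matching of the product into near-perfect matchings of the factors.
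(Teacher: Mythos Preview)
Your argument is correct and uses essentially the same matching construction as the paper: lift a matching of $H$ to every $G$-fiber, then fill in the remaining $H$-layers with matchings of $G$, and observe that the uncovered set is independent and hence of size one by factor-criticality. The one genuine difference is that you start with an \emph{arbitrary maximal} matching $M_H$ of $H$ (and arbitrary maximal matchings $M_G^{(v)}$ in each surviving layer), whereas the paper fixes \emph{maximum} matchings $M_G$ and $M_H$ from the outset. The paper's choice yields only the near-perfect-matching conclusion directly; to get equimatchability of $G$ it then invokes Theorem~\ref{le:con} applied to the matching $M_1$, noting that $(G\boxtimes H)-V(M_1)$ is a single copy of $G$. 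Your version is slightly more economical: because $M_H$ and the $M_G^{(v)}$ were arbitrary maximal, the single counting step simultaneously forces every maximal matching of $H$ (and of $G$) to be near-perfect, so equimatchability drops out without an appeal to Theorem~\ref{le:con}.
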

\begin{proof}
  We show that~$G$ is equimatchable and has near-perfect matchings, and the other holds by symmetry.
  By Corollary~\ref{cr:equimatchable}, $G\boxtimes H$ is factor-critical, hence
  \[
    \alpha'(G\boxtimes H)= \frac{|V(G)||V(H)|-1}{2}.
  \]
  We take a maximum matching~$M_{G}$ of~$G$ and a maximum matching~$M_{H}$ of~$H$.  
  Let $M_{G}= \{u_{1} u'_{1}, u_{2} u'_{2}, \ldots, u_{\alpha'(G)} u'_{\alpha'(G)}\}$ and $M_{H}= \{v_{1} v'_{1}, v_{2} v'_{2}, \ldots, v_{\alpha'(H)} v'_{\alpha'(H)}\}$.
  We define 
  \begin{align*}
    M_1 &= \{(u, v_j)(u, v'_j)\mid u\in V(G), 1\leq j\leq \alpha'(H)\},
    \\
    M_2 &= \{(u_i, v)(u'_i, v)\mid 1\leq i\leq \alpha'(G), v\in V(H)\setminus V(M_H)\}.
  \end{align*}
  It is easy to check that~$M= M_1\cup M_2$ is a matching of~$G\boxtimes H$.
  Since~$M_G$ and~$M_H$ are maximum,~$V(G)\setminus V(M_G)$ and~$V(H)\setminus V(M_H)$ are independent sets in~$G$ and~$H$, respectively.
  By the definition of~$G\boxtimes H$,
  \[
    V(G\boxtimes H)\setminus V(M) = (V(G)\setminus V(M_G))\times (V(H)\setminus V(M_H))
  \]
  is an independent set in~$G\boxtimes H$, and hence~$M$ is maximal.
  Since~$G\boxtimes H$ is equimatchable and factor-critical,~$|(V(G)\setminus V(M_G))\times (V(H)\setminus V(M_H))|=1$, Thus,
  \[
    |V(G)\setminus V(M_G)|=1,\quad |V(H)\setminus V(M_H)|=1.
  \]
  This verifies that~$G$ has a near-perfect matching.

  By Theorem~\ref{le:con}, $(G\boxtimes H)- V(M_1)$ is equimatchable.
  Note that
  \[
    (G\boxtimes H)- V(M_1) = G\boxtimes (V(H)\setminus V(M_{H})),
  \]
  and it is isomorphic to~$G$ because $V(H)\setminus V(M_{H})$ is trivial.
  Thus, $G$ is equimatchable.  This concludes the proof.
\end{proof}

We prove the odd case of Theorem~\ref{thm:equimatchable} in two steps.  First, at least one factor is complete.  Second, the independence number of the other is at most two.
They are using Propositions~\ref{lem:p3} and~\ref{lem:K3}, respectively.
Note that if a connected graph is not complete if and only if it contains an induced~$P_{3}$.
Combining Theorem~\ref{le:con} and Proposition~\ref{lem:p3}, we can show one side is complete.
For the second step, we need the following technical lemma.

\begin{lemma} \label{le:perfect}
  Let $G$ be an odd connected graph that is equimatchable and has a near-perfect matching.  If~$\alpha(G) > 2$, then there exists an independent set $S$ of size three in $G$ such that $G - v$ has a perfect matching for any $v\in S$. 
\end{lemma}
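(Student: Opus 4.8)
The plan is to analyze $G$ through its Gallai--Edmonds decomposition $(D,A,C)$. Because $G$ is connected, odd, and has a near-perfect matching, every maximum matching of $G$ misses exactly one vertex; consequently equimatchability of $G$ means that \emph{every} maximal matching of $G$ is near-perfect, $G[D]$ has exactly $|A|+1$ components, each of them factor-critical and hence of odd order, $G[C]$ has a perfect matching, there are no edges between $D$ and $C$, and $D$ is precisely the set of vertices $v$ for which $G-v$ has a perfect matching. Thus it suffices to produce an independent set of size three lying inside $D$.

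I would split on $|A|$. If $|A|\ge 2$, then $G[D]$ has at least three components, and picking one vertex from each of three distinct components gives an independent triple in $D$, since there are no edges of $G$ between vertices in different components of $G[D]$. If $|A|=0$, then $G[D]$ is connected; since there are no $D$--$C$ edges and $G$ is connected, $C=\emptyset$, so $G=G[D]$ is factor-critical, $D=V(G)$, and any independent triple of $G$ works (one exists as $\alpha(G)>2$).

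The case $|A|=1$, say $A=\{a\}$, is where the real work lies. I would first show that $C=\emptyset$: otherwise, since $C$ is joined to the rest of $G$ only through $a$, some $c\in C$ is adjacent to $a$; extend $\{ac\}$ to a maximal matching $M^{*}$ of $G$, which by equimatchability must be near-perfect. However, each vertex of $D$ has all of its neighbours inside its own component of $G[D]$ together with $a$, and $a$ is already matched (to $c$); hence $M^{*}$ restricted to either of the two components of $G[D]$ is a maximal matching of that (odd) component and therefore leaves at least one vertex of it uncovered, giving at least two uncovered vertices in $M^{*}$ --- contradicting near-perfectness. Once $C=\emptyset$, we have $V(G)=V(K_1)\cup V(K_2)\cup\{a\}$ where $K_1,K_2$ are the two components of $G[D]$. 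If some $K_i$ contains two nonadjacent vertices, combine them with any vertex of the other component to obtain an independent triple in $D$. Otherwise $K_1$ and $K_2$ are both cliques, and I would show this cannot happen: if $a$ were adjacent to every vertex of $K_1$, then $V(K_1)\cup\{a\}$ and $V(K_2)$ are two cliques covering $V(G)$, forcing $\alpha(G)\le 2$; so pick $x\in V(K_1)$ and $y\in V(K_2)$ each nonadjacent to $a$; then a perfect matching of the even clique $K_1-x$ together with one of $K_2-y$ is a maximal matching of $G$ whose only uncovered vertices are $x,y,a$ (each having all its neighbours matched) and whose size is $(|V(G)|-3)/2<(|V(G)|-1)/2$, contradicting equimatchability.

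The principal obstacle is the $|A|=1$ case, and inside it the argument that $C=\emptyset$: that is the point at which the global equimatchability hypothesis must be converted into a purely local constraint on how the single vertex $a$ can interact with the odd components of $G[D]$. After that step, the remainder is a routine analysis of two cliques sharing the common neighbour $a$.
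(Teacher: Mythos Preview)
Your proof is correct and follows essentially the same route as the paper: Gallai--Edmonds decomposition, the observation that $D$ is exactly the set of vertices $v$ with $G-v$ perfectly matchable, and a case analysis on the number of components of $G[D]$ (equivalently on $|A|$), with the crucial $|A|=1$ case handled by exhibiting a maximal matching that leaves three vertices uncovered. The only differences are cosmetic: the paper cites a known lemma to obtain $C(G)=\emptyset$ globally rather than arguing it case by case, and in the $|A|=1$ endgame the paper shows directly that $a$ dominates one entire component and then swaps $a$ out of a maximum independent set, instead of your split on whether the two components are cliques.
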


Before presenting the proof of Lemma~\ref{le:perfect}, we use it to prove the main result of this section.

\begin{proof}[Proof of Theorem~\ref{thm:equimatchable}]
  We have seen the even case in Corollary~\ref{cr:equimatchable}.
  The proof is focused on the odd case.
  The sufficiency follows from Proposition~\ref{le:odd order}.
  For the necessity, suppose that~$G\boxtimes H$ is equimatchable.
  Recall that~$G$ and~$H$ are both odd, nontrivial, and connected graphs.

  In the first step, we show that at least one of~$G$ and~$H$ is a complete graph.
  We have nothing to show if~$G$ is a complete graph.  Hence, we assume otherwise and show that~$H$ must be a complete graph.
  Let~$M$ be a near-perfect matching of~$G$, which exists by Lemma~\ref{lem:equimatchable}, and let~$u_{0}$ be the unique vertex in~$V(G)\setminus V(M)$.
  We may assume that~$u_{0}$ is not universal (i.e., adjacent to all the other vertices in~$G$).
  Otherwise, we can replace any non-universal vertex (which exists since~$G$ is not a complete graph) with~$u_{0}$ in~$M$.
  Let~$u_{1} u_{2}$ be an edge in~$M$ such that~$u_{1}\not\in N(u_{0})$, and
  let~$U = \{u_{0}, u_{1}, u_{2}\}$.
  If~$V(G) \ne U$, then~$(G - U)\boxtimes H$ is even and has a perfect matching.
  Thus, $G[U]\boxtimes H$ is equimatchable by Theorem~\ref{le:con}.
  If $u_{0}$ and~$u_{2}$ are adjacent, then~$H$ is complete by Proposition~\ref{lem:p3}.
  Otherwise, $G[U]\boxtimes H$ is disconnected, and one component is~$G[\{u_{1}, u_{2}\}]\boxtimes H$.
  Since this component is even and equimatchable, it follows from Corollary~\ref{cr:equimatchable} that~$H$ is complete.

  In the second step, we may assume that~$H$ is complete (we can switch them otherwise) and we show $\alpha(G)\leq 2$.
  Suppose for contradiction that $\alpha(G) > 2$.
  According to Lemmas~\ref{lem:equimatchable} and~\ref{le:perfect}, $G$ contains an independent set~$S$ of size three such that~$G - \{u\}$ has a perfect matching for all~$u\in S$.
  Thus,~$G\boxtimes K_{3}$ is not equimatchable by Proposition~\ref{lem:K3}, and then~$G\boxtimes H$ is not equimatchable by Theorem~\ref{le:con}.
\end{proof}

For Lemma~\ref{le:perfect}, we need the Gallai--Edmonds decomposition theorem.  
Let $G$ be a graph, we partition $V(G)$ into~$A(G)\uplus C(G)\uplus D(G)$ defined as follows:
\begin{align*}
  B(G) =& \bigcap\, \{V(M)\mid M \text{ is a maximum matching}\},
  \\
  D(G) =& V(G)\setminus B(G),
  \\
  A(G) =& B(G) \cap N(D(G)),
  \\
  C(G) =& B(G) \setminus A(G).
\end{align*}

\begin{theorem}[Gallai--Edmonds Decomposition \cite{lovasz-plummer-86}] \label{th:gallai}
  Let the partition~$A(G)\uplus C(G)\uplus D(G)$ be defined as above.
  \begin{enumerate}[i)]
  \item Each component of the subgraph induced by~$D(G)$ is factor-critical.
  \item Every maximum matching of $G$ matches every vertex of $A(G)$ to a vertex of a distinct component of the subgraph induced by~$D(G)$.
  \end{enumerate}
\end{theorem}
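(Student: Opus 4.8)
The plan is to derive both claims from \emph{Gallai's Lemma}: a connected graph in which every vertex is missed by some maximum matching is factor-critical. With the definitions given, $D(G)$ is exactly the set of these \emph{inessential} vertices, $B(G)=V(G)\setminus D(G)$ is the set covered by every maximum matching, $A(G)$ is the part of $B(G)$ adjacent to $D(G)$, and $C(G)$ is the remainder. A first useful remark is that no vertex of $D(G)$ has a neighbour in $C(G)$, since such a neighbour would lie in $B(G)\cap N(D(G))=A(G)$; thus every edge leaving a component of the subgraph induced by $D(G)$ lands in $A(G)$. I would then run an induction on $|A(G)|$, using Gallai's Lemma at the base and a single reduction lemma in the inductive step.

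The first and hardest step is Gallai's Lemma itself. Let $G$ be connected with every vertex inessential, and write its deficiency as $|V(G)|-2\alpha'(G)$. A perfectly matchable graph has no inessential vertex, so the deficiency is at least one; and if it equals one, then a maximum matching missing a prescribed vertex $v$ is already a perfect matching of $G-v$, whence $G$ is factor-critical. So it suffices to exclude deficiency at least two. Fixing a maximum matching $M$ and following $M$-alternating paths out of the $M$-exposed vertices, connectivity forces the search reachable from one exposed vertex to meet a second exposed vertex or a second search tree; after contracting the odd alternating cycles (blossoms) that can arise in a nonbipartite graph, this meeting yields an augmenting path and contradicts the maximality of $M$. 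Keeping the alternating search consistent in the presence of blossoms is the technical crux of the entire theorem.

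For the base case $A(G)=\emptyset$, the remark above shows that each component $K$ of the subgraph induced by $D(G)$ is in fact a whole component of $G$; restricting maximum matchings of $G$ to $K$ shows that every vertex of $K$ is inessential in $K$, so Gallai's Lemma makes $K$ factor-critical, proving claim~(i), while claim~(ii) is vacuous. For the inductive step, assume $A(G)\neq\emptyset$ and pick $a\in A(G)$. The engine is the reduction lemma asserting that deleting $a$ leaves the inessential set intact, namely $D(G-a)=D(G)$, $A(G-a)=A(G)\setminus\{a\}$, and $|V(G-a)|-2\alpha'(G-a)=|V(G)|-2\alpha'(G)+1$ (the deficiency rises by one because $a$, being covered by every maximum matching, is essential). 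Proving this lemma is again an alternating-path argument and, together with Gallai's Lemma, carries the real difficulty.

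Granting the reduction lemma, the induction finishes quickly. Since the subgraphs induced by $D(G-a)$ and $D(G)$ coincide, the inductive hypothesis applied to $G-a$ gives that each of their common components is factor-critical, which is claim~(i) for $G$. For claim~(ii), take any maximum matching $M$ of $G$; because $a$ is essential, $M$ covers $a$, and deleting the edge of $M$ at $a$ produces a maximum matching of $G-a$ to which the inductive hypothesis applies, matching $A(G)\setminus\{a\}$ into distinct components of the subgraph induced by $D(G)$. It then remains to check that $M$ matches $a$ itself into yet another, distinct $D$-component: the factor-criticality established in claim~(i) lets me reroute $M$ along an alternating path should $a$ be matched into $C(G)$, to another vertex of $A(G)$, or into a $D$-component already used, in each case producing either a larger matching or a maximum matching exposing a vertex of $A(G)\subseteq B(G)$ --- both impossible. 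This last rerouting, and the alternating-path proofs of Gallai's Lemma and the reduction lemma, are where all the difficulty lies; the remaining steps are bookkeeping with maximum matchings.
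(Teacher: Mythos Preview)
The paper does not prove this statement at all: it is quoted as a classical result with a citation to Lov\'asz and Plummer's monograph, and is used as a black box in the proof of Lemma~\ref{le:perfect}. There is therefore no paper-proof to compare your proposal against.

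For what it is worth, your outline is the standard textbook route (prove Gallai's Lemma by an alternating-path/blossom argument, then induct on $|A(G)|$ via the stability lemma $D(G-a)=D(G)$ for $a\in A(G)$), and is essentially the proof one finds in the cited reference or in Diestel. The sketch is correct in its architecture; the only places you have not filled in are exactly the ones you flag yourself---the blossom argument for Gallai's Lemma and the alternating-path proof of the reduction lemma---and these are indeed where all the work lies.
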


\begin{lemma}[\cite{lesk1984equi}] \label{le:independent}
  Let $G$ be an odd connected graph.  If~$G$ is equimatchable, then $C(G)= \emptyset$ and $A(G)$ is an independent set of $G$. 
\end{lemma}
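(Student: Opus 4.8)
The plan is to recast the matching condition in Gallai--Edmonds terms and then locate an independent triple inside $D(G)$. First I would note that, because $G$ is odd and carries a near-perfect matching, its matching deficiency is exactly one: every maximum matching exposes precisely one vertex. Consequently $G - v$ has a perfect matching if and only if $v$ is the vertex exposed by some maximum matching, i.e.\ if and only if $v \in D(G)$. So the lemma reduces to the purely structural claim that $\alpha(G[D(G)]) \ge 3$.

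Next I would invoke Lemma~\ref{le:independent}, which gives $C(G) = \emptyset$ and $A(G)$ independent, so that $V(G) = A(G) \uplus D(G)$, together with Theorem~\ref{th:gallai}. Let $k$ denote the number of components of $G[D(G)]$. By the standard consequence of Theorem~\ref{th:gallai} that each such component is factor-critical (hence odd) while a maximum matching matches each vertex of $A(G)$ into a distinct component, the number of exposed vertices equals $k - |A(G)|$; since this deficiency is $1$, we get $k = |A(G)| + 1$. Because vertices in distinct components of $G[D(G)]$ are pairwise non-adjacent in $G$, choosing one vertex per component yields an independent set of size $k$ inside $D(G)$. Thus the case $k \ge 3$ is immediate, and the case $k = 1$ (where $G = G[D(G)]$ is factor-critical) follows at once, since any maximum independent set of $G$, of size at least three by hypothesis, already lies in $D(G)$.

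The only remaining case is $k = 2$, that is $A(G) = \{a\}$ with two factor-critical components $D_1, D_2$. A cross-component selection now gives only an independent pair, so I would aim to show that at least one $D_i$ fails to be a clique; an independent pair inside that $D_i$ together with one vertex of the other component then forms the sought triple inside $D(G)$. To this end, suppose for contradiction that both $D_1$ and $D_2$ are cliques. Since $A(G)$ is independent and the only edges leaving $D(G)$ run to $a$, every independent set contains at most one vertex from each $D_i$, so $\alpha(G) > 2$ forces a maximum independent set of the shape $\{a, x_1, x_2\}$ with each $x_i \in D_i$ a non-neighbor of $a$. Matching $D_i - x_i$ perfectly inside each odd clique $D_i$ leaves exactly $a, x_1, x_2$ exposed; these are pairwise non-adjacent, so the matching is maximal, yet its size is $(|V(G)| - 3)/2$, strictly below the maximum $(|V(G)| - 1)/2$. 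This contradicts equimatchability, so some $D_i$ is not complete, finishing the argument.

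I expect the $k = 2$ subcase---excluding that both components are cliques---to be the main obstacle, and the decisive device there is the maximal-but-not-maximum matching that exposes precisely the independent triple $\{a, x_1, x_2\}$, turning $\alpha(G) > 2$ into a violation of the equimatchability guaranteed by Corollary~\ref{cr:equimatchable}.
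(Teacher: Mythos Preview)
Your proposal does not address the stated lemma. Lemma~\ref{le:independent} asserts that for an odd connected equimatchable graph $G$ one has $C(G)=\emptyset$ and $A(G)$ is independent; this is quoted from \cite{lesk1984equi} and the paper gives no proof of it. What you have written is instead an argument for Lemma~\ref{le:perfect}: you assume a near-perfect matching and $\alpha(G)>2$, you explicitly \emph{invoke} Lemma~\ref{le:independent} as an ingredient, and you construct an independent triple $S\subseteq D(G)$ with $G-v$ perfectly matchable for each $v\in S$. None of the assertions of Lemma~\ref{le:independent} are argued anywhere in your text.

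If we set aside this mismatch and compare your argument to the paper's own proof of Lemma~\ref{le:perfect}, the overall structure is the same (establish $k=|A(G)|+1$, dispose of $k\ge 3$ and $k=1$, then handle $k=2$), but the $k=2$ endgame differs. The paper shows that the single vertex $a\in A(G)$ must dominate at least one of the two components---otherwise a maximal matching exposing $\{a,u_1,u_2\}$ contradicts equimatchability---and then replaces $a$ in a maximum independent set by any vertex of that dominated component to obtain three independent vertices inside $D(G)$. You instead argue that both components cannot be cliques (via essentially the same bad maximal matching), and then take a non-edge inside the non-clique component together with any vertex of the other component. Both routes are valid and of comparable length; the paper's version extracts a bit more structural information (domination of one component by $a$), while yours stays closer to the independence-number hypothesis.
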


We are now ready for Lemma~\ref{le:perfect}.

\begin{proof}[Proof of Lemma~\ref{le:perfect}]
  Let~$A(G)\uplus C(G)\uplus D(G)$ be the Gallai--Edmonds decomposition of $G$.  By Lemma~\ref{le:independent}, $C(G)= \emptyset$ and $A(G)$ is an independent set.
  Let~$c$ denote the number of components in the subgraph induced by~$D(G)$.
  We claim that
  \[
    c = |A(G)| + 1.
  \]
  On the one hand, $c\le |A(G)| + 1$ since~$G$ has a near-perfect matching.
  On the other hand, $c\ge |A(G)|$ by the definition of~$A(G)$ and Theorem~\ref{th:gallai}(ii).
  Then the claim follows from that~$G$ is odd and all the components of the subgraph are odd by Theorem~\ref{th:gallai}(i).

  By definition, for each vertex~$x\in D(G)$, there is a maximum matching~$M$ of~$G$ such that~$x\in V(G)\setminus V(M)$.
  Since~$G$ has a near-perfect matching,~$M$ must be a perfect matching.
  Thus, for each vertex~$x\in D(G)$, the subgraph~$G - x$ has a perfect matching.
  To finish the proof, it suffices to find three pairwise nonadjacent vertices in~$D(G)$.
  It is trivial when~$c > 2$: we can take a vertex from each of the components of~$G - A(G)$.
  Note that~$c = 1$ if and only if~$A(G)$ is empty, when it follows from the assumption that~$\alpha(G) \ge 3$.

  In the remainder of the proof,~$c=2$.
  Let~$u_{0}$ be the vertex in~$A(G)$, and let~$C_1$ and~$C_2$ be the two components of~$G - u_{0}$.
    We claim that~$u_{0}$ is adjacent to all the vertices in at least one of them.
  Suppose otherwise, and for~$i=1,2$, let~$u_{i}$ be a vertex in~$C_{i}$ nonadjacent to~$u_{0}$.
  For~$i=1,2$, we take a perfect matching~$M_{i}$ of $C_{i} - \{u_{i}\}$.
  Since~$V(G)\setminus (V(M_1)\cup V(M_2)) = \{u_{0}, u_{1}, u_{2}\}$ is an independent set, $M_1\cup M_2$ is a maximal matching of $G$, contradicting the assumption.
  We may assume without loss of generality that~$u_{0}$ is adjacent to all the vertices in~$C_1$.
  Let~$S$ be a maximum independent set of~$G$; note that $|S| \ge 3$ by assumption.
  If~$u_{0}\in S$, we can replace~$u_{0}$ with any vertex from~$C_{1}$.  Thus, we have an independent set of three vertices of~$G - u_{0}$.  This concludes the proof.
\end{proof}

\bibliographystyle{plainurl}
\bibliography{ref}

\end{document}